\newtheorem{theorem}{Theorem}[section]
\newtheorem{proposition}[theorem]{Proposition}
\newtheorem{lemma}[theorem]{Lemma}
\newtheorem{corollary}[theorem]{Corollary}
\theoremstyle{definition}
\newtheorem{example}[theorem]{Example}
\newtheorem{remark}[theorem]{Remark}
\newcommand{\clc}{\mathcal{C}}
\newcommand{\cle}{\mathcal{E}}
\newcommand{\clq}{\mathcal{Q}}
\newcommand{\raro}{\rightarrow}
\newcommand{\NA}{\mathcal{N} \mathcal{A}}
\newcommand{\vp}{\varphi}
\newcommand{\bd}{\mathbb{D}}
\newcommand{\bt}{\mathbb{T}}
\newcommand\hlight[1]{\tikz[overlay, remember picture,baseline=-\the\dimexpr\fontdimen22\textfont2\relax]\node[rectangle,fill=white!50,rounded corners,fill opacity = 0.2,draw,thick,text opacity =1] {$#1$};}
\title[Idempotent, model, and Toeplitz operators]{Idempotent, model, and Toeplitz operators attaining their norms}
\author[Bala]{Neeru Bala}
\address{Neeru Bala, Department of Mathematics, Indian Institute of Technology Bombay, Powai, Mumbai, 400076, India}
\email{ma16resch11001@iith.ac.in }
\author[Dhara]{Kousik Dhara}
\address{Kousik Dhara, Indian Statistical Institute,
Statistics and Mathematics Unit, 8th Mile, Mysore Road,
Bangalore, 560 059, India.}
\email{kousik.dhara1@gmail.com}
\author[Sarkar]{Jaydeb Sarkar}
\address{Jaydeb Sarkar, Indian Statistical Institute,
Statistics and Mathematics Unit, 8th Mile, Mysore Road,
Bangalore, 560 059, India.}
\email{jaydeb@gmail.com, jay@isibang.ac.in}
\author[Sensarma]{Aryaman Sensarma}
\address{Aryaman Sensarma, Indian Statistical Institute,
Statistics and Mathematics Unit, 8th Mile, Mysore Road,
Bangalore, 560 059, India.}
\email{aryamansensarma@gmail.com}
\subjclass[2010]{47B07, 47A46, 47A10, 47B35, 30J05}
\keywords{Toeplitz operators, model operators, idempotents, Hardy space, inner functions}
\numberwithin{equation}{section}
\begin{document}
\begin{abstract}
We study idempotent, model, and Toeplitz operators that attain the norm. Notably, we prove that if $\mathcal{Q}$ is a backward shift invariant subspace of the Hardy space $H^2(\mathbb{D})$, then the model operator $S_{\mathcal{Q}}$ attains its norm. Here $S_{\mathcal{Q}} = P_{\mathcal{Q}}M_z|_{\mathcal{Q}}$, the compression of the shift $M_z$ on the Hardy space $H^2(\mathbb{D})$ to $\mathcal{Q}$.
\end{abstract}
\maketitle
\section{Introduction}

Let $\cle$ be a Hilbert space (here all Hilbert spaces are separable and over $\mathbb{C}$) and $T$ be a bounded linear operator on $\cle$ ($T \in \mathscr{B}(\cle)$ in short). Then $T$ is said to be \textit{norm-attaining} (in short $T \in \NA$) if there exists a non-zero vector $f \in \cle$ such that
\[
\|Tf\|_{\cle} = \|T\|_{\mathscr{B}(\cle)} \|f\|_{\cle}.
\]
As far as the theory of bounded linear operators is concerned, it is perhaps very natural to study operators that attain the norm. It is also worth to point out that compact operators are always norm attaining. While the norm attaining property at the Banach space level has been studied extensively (for instance, see \cite{Acosta,L,M}), the same for operators on Hilbert spaces has so far received far less attention (however, see \cite{Neves, SKP, PAL, Ramesh}). On the other hand, in $1965$ Brown and Douglas \cite[Lemma 2]{BD}, in answering a question of H. Helson \cite[page 12]{HH}, established a close connection between arithmetic of inner functions, Toeplitz operators, and norm attaining operators on Hilbert spaces. Curiously, this is also intimately connected with the Sarason's commutant lifting theorem \cite{Sarason}.

In this paper, we study norm attainment of three classical (and fairly non-compact in nature) Hilbert space operators, namely, Toeplitz operators, model operators, and idempotent operators. Toeplitz operators are one of the most important concrete operators, where concept of a model operator is one of the most useful in operator theory and function theory having important applications in various fields. Model operators also play the role of building blocks in the basic theory of linear operators \cite{Hari, NF}. Idempotent operators (also known as oblique projections) are yet another concrete (but complex) class of operators that plays a significant role in many definite problems in operator theory and operator algebras.

In Section \ref{sec: ideompotents} we study idempotent operators. Let $T$ be an idempotent operator (that is, $T^2 = T$) on some Hilbert space. In Theorem \ref{prop: NA idempotent} we prove that $T \in \NA$ if and only if the \textit{Buckholtz operator} $T+T^*-I$ is in $\NA$. Observe that the Buckholtz operator \cite{Buc1, Buc} is a self-adjoint operator.

Section \ref{sec:MO} deals with model operators. Let $M_z$ denote the forward shift (or the multiplication operator by the coordinate function $z$) on the Hardy space $H^2(\bd)$. Let $\clq$ be a closed $M_z^*$-invariant subspace of $H^2(\bd)$ (see Section \ref{sec:MO} for more details). The \textit{model operator} $S_{\clq}$ is the compression of $M_z$ to $\clq$, that is, $S_{\clq} = P_{\clq} M_z|_{\clq}$. Theorem \ref{thm: norm 1} says that
\[
S_{\clq} \in \NA.
\]
Of course, the above $\clq$ is associated with an (essentially unique) inner function $\theta \in H^\infty(\bd)$ (because of Beurling), that is, $\clq = \clq_\theta$, where $\clq_\theta = H^2(\bd)/\theta H^2(\bd)$. The representing  Beurling inner function $\theta$ plays an essential role in the proof of the above result. We also obtain norm attaintment result for the general Sz.-Nagy and Foia\c{s} model operators (vector-valued counterpart of $S_\clq$).

The final section, Section \ref{sec: TO}, of this paper deals  with Toeplitz and analytic Toeplitz operators. In Theorem \ref{Toeplitz Operator Valued} we prove that a Toeplitz operator $T_\Phi$ with operator-valued symbol $\Phi$ is in $\NA$ if and only if that $\Phi$ satisfies certain inner function divisibility criterion. This result recovers Brown and Douglas classification of norm attaining Toeplitz operators with scalar-valued symbols. We also discuss the case of analytic Toeplitz operators (see Theorem \ref{mult op Vector valued}) and Laurent operators (see Proposition \ref{NA scalar Laurent }). Examples \ref{not NA eg} and \ref{eg: back shift} bring out more insight between scalar and vector-valued Toeplitz operators.

Before we proceed to the main content of this paper, in the following we collect some useful results (see \cite[Corollary 2.4, Proposition 2.5]{Neves} and \cite[Theorem 2.4]{PAL}):

\begin{theorem}\label{thm: all results}
Let $T \in \mathscr{B}(\cle)$. The following are equivalent:

(i) $T \in \NA$.

(ii) $T^* \in \NA$.

(iii) $TT^* \in \NA$.

(iv) $\|T\|^2$ is in the point spectrum of $T T^*$.
\end{theorem}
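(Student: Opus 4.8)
The strategy is to route everything through the positive operators $T^*T$ and $TT^*$, using the elementary fact that for a \emph{positive} operator, attaining the norm is the same as having the norm as an eigenvalue. Precisely, I would first record the following observation: if $A\in\mathscr{B}(\cle)$ satisfies $A\ge 0$ and $\|Af\|=\|A\|\,\|f\|$ for some unit vector $f$, then $Af=\|A\|f$. To see this one may assume $\|A\|>0$; since $\|A\|A-A^2=A^{1/2}(\|A\|I-A)A^{1/2}\ge 0$, one gets $\|A\|^2=\|Af\|^2=\langle A^2f,f\rangle\le\|A\|\langle Af,f\rangle\le\|A\|\,\|Af\|\le\|A\|^2$, so every inequality is an equality; in particular $\langle Af,f\rangle=\|A\|$, whence $\|Af-\|A\|f\|^2=\|A\|^2-2\|A\|^2+\|A\|^2=0$. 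This single observation will be applied twice, once to $A=T^*T$ and once to $A=TT^*$.

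After disposing of the trivial case $T=0$ (where all four statements hold), I would argue as follows. For (iii)$\Leftrightarrow$(iv): the implication (iv)$\Rightarrow$(iii) is immediate, since an eigenvector for the eigenvalue $\|TT^*\|=\|T\|^2$ of $TT^*$ witnesses that $TT^*$ attains its norm; and (iii)$\Rightarrow$(iv) is exactly the observation above applied to the positive operator $TT^*$. For (i)$\Rightarrow$(iv): if $\|Tf\|=\|T\|$ with $\|f\|=1$, then $\|T\|^2=\|Tf\|^2=\langle T^*Tf,f\rangle\le\|T^*Tf\|\le\|T^*T\|=\|T\|^2$, so $T^*T$ attains its norm at $f$, and the observation (with $A=T^*T$) gives $T^*Tf=\|T\|^2f$; applying $T$ and noting that $Tf\ne 0$ because $\|Tf\|=\|T\|\ne 0$, we obtain $TT^*(Tf)=\|T\|^2(Tf)$, i.e.\ $\|T\|^2$ is an eigenvalue of $TT^*$.

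For (iv)$\Rightarrow$(i): given $TT^*g=\|T\|^2g$ with $g\ne 0$, put $f=T^*g$; then $\|f\|^2=\langle TT^*g,g\rangle=\|T\|^2\|g\|^2>0$ and $\|Tf\|=\|TT^*g\|=\|T\|^2\|g\|=\|T\|\,\|f\|$, so $T\in\NA$. Finally, for (i)$\Leftrightarrow$(ii) I would not expect a bare symmetry argument to suffice, since statement (iv) is not literally symmetric in $T$ and $T^*$; instead I would apply the equivalence (i)$\Leftrightarrow$(iv) just proved, once to $T$ and once to $T^*$ (using $\|T^*\|=\|T\|$), which reduces the claim to the assertion that $\|T\|^2$ is an eigenvalue of $TT^*$ if and only if it is an eigenvalue of $T^*T$. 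That in turn follows from the shuttle $g\mapsto T^*g$: if $TT^*g=\|T\|^2g$, then $h:=T^*g$ is nonzero (same computation as above) and $T^*Th=T^*(TT^*g)=\|T\|^2h$, and the reverse implication is obtained symmetrically via $h\mapsto Th$. This closes the loop, so (i)--(iv) are equivalent.

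The only genuine obstacle is the positivity observation in the first paragraph: passing from norm-attainment to an honest eigenvalue must use \emph{something}, and the right something is the operator inequality $A^2\le\|A\|A$ for $A\ge 0$. Everything afterward is bookkeeping, the only care points being the separate (trivial) treatment of $T=0$ and the routine verifications that the auxiliary vectors $Tf$ and $T^*g$ are nonzero, so that they really do witness the claimed properties.
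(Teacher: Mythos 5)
Your proof is correct. Note, however, that the paper does not actually prove Theorem \ref{thm: all results}: it is quoted verbatim from the references (\cite[Corollary 2.4, Proposition 2.5]{Neves} and \cite[Theorem 2.4]{PAL}), so there is no in-paper argument to compare against. What you have written is a clean, self-contained derivation along the standard lines of those references: the one genuinely non-trivial step is exactly the one you isolate, namely that a positive operator attaining its norm has its norm as an eigenvalue, which you obtain from the operator inequality $A^2\le\|A\|A$ and a forced chain of equalities in Cauchy--Schwarz. The remaining implications (the shuttle $g\mapsto T^*g$ between eigenvectors of $TT^*$ and $T^*T$, the nonvanishing of the auxiliary vectors, the separate treatment of $T=0$) are all handled correctly, and your remark that (i)$\Leftrightarrow$(ii) should not be dismissed as ``obvious by symmetry'' but rather deduced through (iv) applied to both $T$ and $T^*$ is exactly the right level of care. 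No gaps.
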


This will be used frequently in what follows.

\section{Idempotent Operators}\label{sec: ideompotents}

It is evident that any orthogonal projection on a Hilbert space is norm attaining. Here we deal with the issue of norm attainment of idempotent operators. We begin with an example of an idempotent which is not a norm attaining operator.

\begin{example}\label{example idempotent}
Define a linear operator $T: \ell^2(\mathbb{N}) \longrightarrow \ell^2(\mathbb{N})$ by
\begin{align*}
T(\{\alpha_n\}_{n=1}^\infty) = \Big\{\alpha_1, 0, \alpha_3, (1-\dfrac{1}{3}) \alpha_3, \alpha_5, (1-\dfrac{1}{5}) \alpha_5, \ldots, \alpha_{2n+1}, (1-\dfrac{1}{2n+1})\alpha_{2n+1},\ldots\Big\}.
\end{align*}
Clearly, $T$ is an idempotent operator. Note that for any $\alpha = \{\alpha_n\}_{n=1}^\infty \in \ell^2(\mathbb{N})$, we have
\begin{align}\label{Eqn 1}
\|T\alpha\|^2 = \sum_{n=0}^{\infty}\left \{1+\left(1-\dfrac{1}{2n+1}\right)^2\right\}|\alpha_{2n+1}|^2 < 2 \|\alpha\|^2,
\end{align}
and hence, $\|T\| \leq \sqrt{2}$.
Furthermore, for all $n \geq 1$, we also have
\[
\|T(e_{2n+1})\|^2 =1+\left(1-\dfrac{1}{2n+1}\right)^2,
\]
which implies that
\[
\|T\| \geq \sqrt{1+\left(1-\dfrac{1}{2n+1}\right)^2},
\]
and hence $\|T\| \geq \sqrt{2}$. We conclude that $\|T\|=\sqrt{2}$. Finally, by $\eqref{Eqn 1}$, it follows that $T \notin \NA$.
\end{example}

We turn now to classifying idempotent operators that admit the norm. We begin by recalling a geometric construction of idempotent operators. Let $T \in \mathscr{B}(\cle)$ be an idempotent.
Let $P$ denote the orthogonal projection onto $\mbox{ran~} T$. Then, by Feldman, Krupnik and Markus \cite[Equation (1.8)]{FELDMAN} (also see \cite{Bottcher,Szyld}), there exists $X\in\mathscr{B}(\mbox{ker~} P,\mbox{ran~} P)$ such that
\[T=
\begin{bmatrix}
I& X\\
0&0
\end{bmatrix},
\]
on $\mbox{ran~} P \oplus \ker P$. Set $A=I+XX^*$. Then
\[
TT^*=\begin{bmatrix}
A&0\\
0&0
\end{bmatrix}.
\]
Note that $\|A\|=1+\|X\|^2$ and $\|T\|^2=\|TT^*\|=\|A\|$.

We are now ready to prove our first classification result. Observe that the \textit{Buckholtz operator} $T+T^*-I$ is a self adjoint operator.

\begin{theorem}\label{prop: NA idempotent}
Let $T\in\mathscr{B}(\cle)$ be an idempotent. Then $T \in \NA$ if and only if
\[
T+T^*-I \in \NA.
\]
\end{theorem}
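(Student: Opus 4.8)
The plan is to reduce both norm-attainment conditions, via Theorem \ref{thm: all results}(iv), to one and the same statement about the point spectrum of $XX^*$, where $X$ is the corner operator appearing in the geometric decomposition recalled just above.

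First I would write out the relevant matrix forms. From $T = \begin{bmatrix} I & X \\ 0 & 0 \end{bmatrix}$ on $\operatorname{ran} P \oplus \ker P$ we get $T^* = \begin{bmatrix} I & 0 \\ X^* & 0 \end{bmatrix}$, hence the Buckholtz operator is
\[
B := T + T^* - I = \begin{bmatrix} I & X \\ X^* & -I \end{bmatrix}, \qquad B^2 = \begin{bmatrix} I + XX^* & 0 \\ 0 & I + X^*X \end{bmatrix}.
\]
Since $B$ is self-adjoint, $BB^* = B^2$, and because $\|XX^*\| = \|X^*X\| = \|X\|^2$ we obtain $\|B\|^2 = \|B^2\| = 1 + \|X\|^2 = \|A\| = \|T\|^2$, with $A = I + XX^*$ as recorded above. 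Note also $A \ge I$, so $\|A\| = 1 + \|X\|^2 \ge 1 > 0$ whenever $X \ne 0$.

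Next I would apply Theorem \ref{thm: all results}(iv) twice. On the $T$ side, $T \in \NA$ iff $\|T\|^2 \in \sigma_p(TT^*)$; since $TT^* = A \oplus 0$ and $\|T\|^2 = \|A\| > 0$ (assuming $X \ne 0$), this is equivalent to $\|A\|$ being an eigenvalue of $A$, i.e.\ to $\|X\|^2 \in \sigma_p(XX^*)$. On the $B$ side, $B \in \NA$ iff $\|B\|^2 \in \sigma_p(B^2)$; since $B^2 = (I + XX^*) \oplus (I + X^*X)$ and $\|B\|^2 = 1 + \|X\|^2$, this is equivalent to $\|X\|^2 \in \sigma_p(XX^*)$ or $\|X\|^2 \in \sigma_p(X^*X)$. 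The final ingredient is the elementary fact that for $\lambda \ne 0$ one has $\lambda \in \sigma_p(XX^*) \iff \lambda \in \sigma_p(X^*X)$ (if $X^*Xv = \lambda v$ with $v \ne 0$, then $Xv \ne 0$ and $XX^*(Xv) = \lambda(Xv)$, and symmetrically). As $\|X\|^2 > 0$, the two conditions for $B$ collapse into the single condition $\|X\|^2 \in \sigma_p(XX^*)$, which is exactly the condition for $T$; hence $T \in \NA \iff B \in \NA$. The degenerate case $X = 0$ is handled separately: then $T = P$ is an orthogonal projection and $B = P - (I-P)$ is a self-adjoint unitary, so both operators are trivially in $\NA$.

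I do not anticipate a serious obstacle. The only points requiring a little care are ensuring that the operator norm of $B^2$ is not controlled by a block different from the one governing $TT^*$ (which is why one records $\|I + XX^*\| = \|I + X^*X\|$ explicitly), and the trivial case $X = 0$, where $\operatorname{ran} P$ or $\ker P$ may vanish and the point-spectrum bookkeeping degenerates. Everything else is a direct application of Theorem \ref{thm: all results}.
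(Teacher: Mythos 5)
Your proposal is correct and follows essentially the same route as the paper: the same $2\times 2$ block decomposition of $T$ and of $(T+T^*-I)^2$, with Theorem \ref{thm: all results}(iv) applied to both operators. The only cosmetic difference is that you invoke the standard identity $\sigma_p(XX^*)\setminus\{0\}=\sigma_p(X^*X)\setminus\{0\}$, whereas the paper proves exactly this fact by hand (multiplying the eigenvalue equation for $I+X^*X$ by $X$ and treating the $Xg=0$ case separately), so the two arguments coincide.
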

\begin{proof}
We continue to use the above notation. Set $B=I+X^*X$. It is easy to see that $T+T^*-I=\begin{bmatrix}I&X\\X^*&-I\end{bmatrix}$, and hence 
\[
(T+T^*-I)(T+T^*-I)^*=(T+T^*-I)^2=\begin{bmatrix}
A&0\\
0&B
\end{bmatrix}.
\]
Since $\|B\|=\|A\|=1+\|X\|^2$, it readily follows that 
\[
\|T+T^*-I\|^2=1+\|X\|^2.
\]	
Now let $T \in \NA$. By Theorem \ref{thm: all results}, we infer that $\|T\|^2$ is an eigenvalue of $TT^*$. Then there exists a non-zero vector $f \in \mbox{ran~} P$ such that $Af=\|A\|f$. It follows that
\[\begin{bmatrix}
	A&0\\
	0&B
\end{bmatrix}\begin{bmatrix}
f\\
0
\end{bmatrix}=(1+\|X\|^2)\begin{bmatrix}
f\\
0
\end{bmatrix},\]
 which, along with Theorem \ref{thm: all results} implies that $T+T^*-I \in \NA$.

\noindent Conversely, assume that $T+T^*-I\in\NA$. By Theorem \ref{thm: all results}, there exists a non-zero vector 
\[
\begin{bmatrix}
f\\
g
\end{bmatrix} \in \mbox{ran~} P \oplus \ker P.
\]
such that
\begin{equation}\label{eqn1 eigenvalue}
Af = (I+XX^*)f=\left(1+\|X\|^2\right)f,
\end{equation}
and 
\begin{equation}\label{eqn2 eigenvalue}
Bg = (I+X^*X)g=\left(1+\|X\|^2\right)g.
\end{equation}
If $f\ne 0$, then the matrix representation of $TT^*$ and \eqref{eqn1 eigenvalue} imply that $T\in\NA$. Now assume that $f=0$ and $g\ne 0$. Multiplying \eqref{eqn2 eigenvalue} from left by $X$ we get
\begin{equation}\label{eigenvalue of T}
	(I+XX^*)Xg=(1+\|X\|^2)Xg.
\end{equation}
If $Xg=0$, then \eqref{eqn2 eigenvalue} implies that $g=g+\|X\|^2g$. Since $g\ne 0$, we get $X=0$. Hence $T$ is an orthogonal projection and, consequently, $T\in\NA$. On the other hand, if $Xg\ne 0$, then by \eqref{eigenvalue of T}, we obtain 
\[
AXg=\|A\|Xg.
\]
Then, the matrix representation of $TT^*$ together with Theorem \ref{thm: all results} asserts that $T\in\NA$. This completes the proof of the theorem.	
\end{proof}

\begin{remark}
The present proof of Theorem \ref{prop: NA idempotent} is due to the referee which is more elegant than our original proof. Our original proof here had been based on the argument of Ando \cite[Theorem 2.6, Theorem 3.9]{Ando}.
\end{remark}

The proof of the corollary below now follows easily from Theorem \ref{prop: NA idempotent} and Theorem \ref{thm: all results}:

\begin{corollary}\label{coro idempotent}
Let $T\in\mathscr{B}(H)$ be an idempotent operator. Then $T \in \NA$ if and only if either $\|T\|$ or $-\|T\|$ is an eigenvalue of $T+T^*-I$.
\end{corollary}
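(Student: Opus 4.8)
The plan is to reduce Corollary \ref{coro idempotent} to the two theorems already in hand. Set $B = T + T^* - I$, which is self-adjoint, so $B = B^*$ and hence $BB^* = B^2$. By Theorem \ref{thm: all results} applied to $B$, we have $B \in \NA$ if and only if $\|B\|^2$ is in the point spectrum of $BB^* = B^2$, i.e.\ there is a non-zero vector $h$ with $B^2 h = \|B\|^2 h$. The first step is therefore to translate ``$\|B\|^2$ is an eigenvalue of $B^2$'' into a statement about eigenvalues of $B$ itself.

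The key observation for that translation is spectral: since $B$ is self-adjoint, $\|B\| = \max\{\,|\lambda| : \lambda \in \sigma(B)\,\}$, so $\|B\|^2 = \mu^2$ where $\mu = \|B\|$ or $\mu = -\|B\|$ lies in $\sigma(B)$. The vector $h$ above lies in $\ker(B^2 - \|B\|^2 I) = \ker\big((B - \|B\|I)(B + \|B\|I)\big)$. I would argue that this kernel decomposes as $\ker(B - \|B\|I) \oplus \ker(B + \|B\|I)$: the two factors commute (both are polynomials in $B$), and because $\pm\|B\|$ are extreme points of the spectrum, $B - \|B\|I$ is negative semidefinite while $B + \|B\|I$ is positive semidefinite, so $(B-\|B\|I)h$ lies in $\ker(B+\|B\|I)$, forcing it to be both $\leq 0$ and $\geq 0$ in the appropriate sense and hence $0$; symmetrically for the other factor. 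Consequently $B^2 h = \|B\|^2 h$ with $h \neq 0$ holds if and only if at least one of $\|B\|$, $-\|B\|$ is an eigenvalue of $B$. (Alternatively, one can invoke the spectral theorem for the self-adjoint operator $B$ directly: $\|B\|^2 \in \sigma_p(B^2)$ iff the spectral measure of $B$ charges $\{\|B\|\} \cup \{-\|B\|\}$ iff $\|B\|$ or $-\|B\|$ is an eigenvalue of $B$.)

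Finally I would assemble the chain of equivalences: $T \in \NA$ $\iff$ $T + T^* - I = B \in \NA$ (Theorem \ref{prop: NA idempotent}) $\iff$ $\|B\|^2 \in \sigma_p(B^2)$ (Theorem \ref{thm: all results}(iv), using $BB^* = B^2$) $\iff$ $\|B\| \in \sigma_p(B)$ or $-\|B\| \in \sigma_p(B)$ (the spectral step above). Since $\|B\| = \|T + T^* - I\|$ and $\|T\| \neq \|B\|$ in general, one should be slightly careful about what is being claimed: the statement of the corollary says ``$\|T\|$ or $-\|T\|$ is an eigenvalue of $T + T^* - I$,'' so I must also check the identity $\|T + T^* - I\| = \|T\|$ for an idempotent $T$. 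But this is immediate from the block computations in the proof of Theorem \ref{prop: NA idempotent}: there it is shown that $\|T\|^2 = \|A\| = 1 + \|X\|^2$ and $\|T + T^* - I\|^2 = 1 + \|X\|^2$, so indeed $\|T\| = \|T + T^* - I\|$, and the two formulations coincide.

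The only mildly delicate point is the spectral step splitting $\ker(B^2 - \|B\|^2 I)$; everything else is bookkeeping with the two cited theorems. Since the splitting is a standard fact about self-adjoint operators (or a one-line consequence of the spectral theorem), I expect no real obstacle, which is consistent with the excerpt's remark that the corollary ``follows easily.''
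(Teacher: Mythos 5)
Your proof is correct and is precisely the argument the paper intends (the paper only remarks that the corollary ``follows easily'' from Theorem \ref{prop: NA idempotent} and Theorem \ref{thm: all results}): pass to the self-adjoint $B=T+T^*-I$ via Theorem \ref{prop: NA idempotent}, apply Theorem \ref{thm: all results}(iv) with $BB^*=B^2$, split $\ker(B^2-\|B\|^2I)$ into the $\pm\|B\|$ eigenspaces, and verify $\|B\|=\|T\|$ from the block computation $\|T\|^2=\|A\|=1+\|X\|^2=\|T+T^*-I\|^2$. You were right to flag that last identity --- it is needed to pass from $\pm\|B\|$ to $\pm\|T\|$ in the statement and the paper leaves it implicit (as it does the degenerate case $T=0$, where the identity fails).
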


In particular, an idempotent operator is norm attaining if and only if the corresponding Buckholtz operator is norm attaining. Now, we return to the idempotent $T$ in Example \ref{example idempotent} and validate the above corollary. First note that
\[
T^*(\{\alpha_n\}_{n=1}^\infty)= \Big\{\alpha_1,0,\alpha_3+\left(1-\frac{1}{3}\right)\alpha_4,0,\alpha_5+\left(1-\frac{1}{5}\right)\alpha_6,0,\ldots\Big\}.
\]
Then
\[
(T+T^*-I)(\{\alpha_n\}_{n=1}^\infty) = \Big\{\alpha_1,-\alpha_2,\alpha_3+(1-\frac{1}{3})\alpha_4,(1-\frac{1}{3})\alpha_3-\alpha_4, \ldots\Big\}.
\]
If $(T+T^*-I)(\{\alpha_n\}_{n=1}^{\infty})=\pm\sqrt{2}(\{\alpha_n\}_{n=1}^{\infty})$ for some $\{\alpha_n\}_{n=1}^{\infty}\in\ell^2(\mathbb{N})$, then the above implies that $\alpha_n=0$ for all $n$. By Corollary \ref{coro idempotent}, we conclude that $T \notin \NA$.

We also have the following general result:
\begin{theorem}
Let $T\in\mathscr{B}(\cle)$ be an idempotent operator. Then $T \in \NA$ if and only if there exists $f \in \cle$ such that $Tf \neq 0$, $P_{\mbox{ran~}T^*} T f = f$, and
\[
T^*P_{(\mbox{ran~}T^*)^{\perp}}T f = (\|T\|^2-1)f.
\]
\end{theorem}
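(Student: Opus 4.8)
The plan is to reuse the $2\times 2$ block picture established before Theorem~\ref{prop: NA idempotent}: write $T=\begin{bmatrix}I&X\\0&0\end{bmatrix}$ on $\mbox{ran~}P\oplus\ker P$, where $P=P_{\mbox{ran~}T^*}$ is the orthogonal projection onto $\mbox{ran~}T$ (note $\mbox{ran~}T=\mbox{ran~}TT^*$ and, since $T$ is idempotent, one checks $\mbox{ran~}P=\mbox{ran~}T$ — I would first record that $P_{\mbox{ran~}T^*}$ in the statement is exactly this $P$, using that for an idempotent $\mbox{ran~}T$ and $\ker T$ are complementary and $\mbox{ran~}T^\perp=\ker T^*$; in fact the cleanest identification is $\overline{\mbox{ran~}T^*}=(\ker T)^\perp$, and since $T$ is idempotent this equals $\mbox{ran~}T$ precisely when... hmm, actually $\overline{\mbox{ran~}T^*}$ need not equal $\mbox{ran~}T$ in general, so I must be careful: the projection onto $\mbox{ran~}T$ is what the block form uses, and I should verify the statement's $P_{\mbox{ran~}T^*}$ coincides with it, or else re-read which projection is intended). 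Having fixed notation, I recall from the proof above that $\|T\|^2=1+\|X\|^2$ and $TT^*=\begin{bmatrix}A&0\\0&0\end{bmatrix}$ with $A=I+XX^*$ acting on $\mbox{ran~}P$.

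Next I would translate the norm-attainment condition via Theorem~\ref{thm: all results}: $T\in\NA$ iff $\|T\|^2=1+\|X\|^2$ is an eigenvalue of $TT^*$, iff there is a nonzero $f\in\mbox{ran~}P$ with $Af=(1+\|X\|^2)f$, i.e. $XX^*f=\|X\|^2 f$. The goal is to show this is equivalent to the three stated conditions. Writing the candidate vector in the statement, $h:=f$ viewed inside $\cle$, I compute $Th=\begin{bmatrix}I&X\\0&0\end{bmatrix}\begin{bmatrix}f\\0\end{bmatrix}=\begin{bmatrix}f\\0\end{bmatrix}$, so $Th\neq 0$ iff $f\neq 0$, and $P_{\mbox{ran~}T^*}Th=Pf=f$ holds automatically once $f\in\mbox{ran~}P$. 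For the last condition I compute $P_{(\mbox{ran~}T^*)^\perp}Th$: since $Th=\begin{bmatrix}f\\0\end{bmatrix}$ lies in $\mbox{ran~}P$, one might think this is zero, but the subtlety is that $\mbox{ran~}T$ and $\mbox{ran~}P$ (the \emph{orthogonal} complement structure) differ — I need to push $Th$ through $T^*$. Using $T^*=\begin{bmatrix}I&0\\X^*&0\end{bmatrix}$ gives $T^*\begin{bmatrix}f\\0\end{bmatrix}=\begin{bmatrix}f\\X^*f\end{bmatrix}$, and $T^*P_{(\mbox{ran~}T^*)^\perp}Th$ should work out to $XX^*f$ reshuffled, so that $T^*P_{(\mbox{ran~}T^*)^\perp}Tf=(\|T\|^2-1)f$ becomes exactly $XX^*f=\|X\|^2 f$ — this is the content I must verify by a direct block computation, keeping track of which summand $(\mbox{ran~}T^*)^\perp$ corresponds to.

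So the forward direction runs: if $T\in\NA$, pick $f\in\mbox{ran~}P$ nonzero with $XX^*f=\|X\|^2f$ (exists by Theorem~\ref{thm: all results}), set the witness to be $f$, and verify the three displayed identities by the block calculation above. Conversely, given such an $f$ with $Tf\neq 0$, $P_{\mbox{ran~}T^*}Tf=f$, and the eigen-type relation, I decode $P_{\mbox{ran~}T^*}Tf=f$ to conclude $Tf\in\mbox{ran~}P$ and then that the component of $f$ in $\ker P$ plays no role, reducing to $f\in\mbox{ran~}P$; then the third identity forces $XX^*f=\|X\|^2f$ with $f\neq 0$ (from $Tf\neq 0$), whence $\|T\|^2$ is an eigenvalue of $TT^*$ and $T\in\NA$ by Theorem~\ref{thm: all results}.

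The main obstacle I anticipate is bookkeeping with the \emph{non-orthogonal} nature of the idempotent: $\mbox{ran~}T$ is not $\mbox{ran~}P$ in the metric sense, and $(\mbox{ran~}T^*)^\perp$ must be correctly identified inside the orthogonal decomposition $\mbox{ran~}P\oplus\ker P$ before the projections $P_{\mbox{ran~}T^*}$ and $P_{(\mbox{ran~}T^*)^\perp}$ can be written as block matrices. In particular I expect $\mbox{ran~}T^*$ to be the ``graph'' $\{(f,X^*f):f\in\mbox{ran~}P\}$ (closed if $X$ has closed range, but as a subspace its closure is what matters), and I will need its orthogonal complement $\{(u,v):u+X v=0\}=\{(-Xv,v):v\in\ker P\}$ to run the computation; reconciling this with the clean final formula is the step that requires genuine care rather than routine algebra.
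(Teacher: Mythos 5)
Your overall strategy --- convert $T\in\NA$ into the statement that $\|T\|^2$ is an eigenvalue of $TT^*$ via Theorem \ref{thm: all results} and then exhibit/decode the witness $f$ --- is the right one, but the proposal breaks down at exactly the point you flagged as needing ``genuine care,'' and the witness you propose is wrong. You work in the decomposition $\mbox{ran~}P\oplus\ker P$ with $P=P_{\mbox{ran~}T}$ and take the witness to be $(f,0)$ with $XX^*f=\|X\|^2f$. But $P_{\mbox{ran~}T^*}$ is not $P$: in your own coordinates $\mbox{ran~}T^*=\{(u,X^*u):u\in\mbox{ran~}P\}$ is a graph subspace, and a short computation shows that the orthogonal projection of $(h_1,h_2)$ onto it is $(u,X^*u)$ with $u=(I+XX^*)^{-1}(h_1+Xh_2)$. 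Hence for your witness, $P_{\mbox{ran~}T^*}T(f,0)=P_{\mbox{ran~}T^*}(f,0)=(1+\|X\|^2)^{-1}(f,X^*f)$, which equals $(f,0)$ only when $X=0$; so the second condition of the theorem fails for every nontrivial idempotent. The sentence ``$P_{\mbox{ran~}T^*}Th=Pf=f$ holds automatically once $f\in\mbox{ran~}P$'' is precisely the conflation of $\mbox{ran~}T$ with $\mbox{ran~}T^*$ that you warned yourself about, and the third condition is only asserted to ``work out,'' not verified. The correct witness in your coordinates is $(u,X^*u)$ with $XX^*u=\|X\|^2u$: then $T(u,X^*u)=\bigl((1+\|X\|^2)u,0\bigr)\neq 0$, $P_{\mbox{ran~}T^*}T(u,X^*u)=(u,X^*u)$, and $T^*P_{(\mbox{ran~}T^*)^\perp}T(u,X^*u)=\|X\|^2(u,X^*u)=(\|T\|^2-1)(u,X^*u)$. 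The converse sketch has the same defect: $P_{\mbox{ran~}T^*}Tf=f$ forces $f\in\mbox{ran~}T^*$, not $f\in\mbox{ran~}P$.

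For comparison, the paper avoids all of this bookkeeping by decomposing $\cle=\mbox{ran~}T^*\oplus(\mbox{ran~}T^*)^{\perp}$ directly (note $(\mbox{ran~}T^*)^{\perp}=\ker T$, and $\mbox{ran~}T^*$ is closed because $T^*$ is idempotent), so that $P_{\mbox{ran~}T^*}$ is a coordinate projection. One observes that the eigenvector $h$ of $TT^*$ for $\|T\|^2$ satisfies $Th=h$ (since $TT^*h=T^2T^*h$), writes $h=f\oplus g$, and checks that $f=P_{\mbox{ran~}T^*}h$ is the witness; the converse runs the identity $TT^*Tf=\|T\|^2\,Tf$ backwards. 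Your block-matrix route can be completed with the corrected witness above, but as written the proof has a genuine gap.
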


\begin{proof}
Suppose $T \in \NA$. By Theorem \ref{thm: all results}, there exists a non-zero vector $h \in \cle$ such that $TT^* h = \|T\|^2h$. Hence
\[
\|T\|^2h = TT^*h = T^2T^*h = T(\|T\|^2h) = \|T\|^2 Th,
\]
which implies that $Th = h$. Set $h = f \oplus g \in \mbox{ran~} T^* \oplus (\mbox{ran~} T^*)^{\perp}$. Since $T(f + g) = f  +g$, $g \in (\mbox{ran~}T^*)^{\perp} = \ker~ T$, we get $Tf = f + g$, and hence we obtain
\[
P_{\mbox{ran~}T^*} Tf = f \; \mbox{ and } \; P_{(\mbox{ran~}T^*)^{\perp}} T f = g.
\]
Now $TT^*(f+g)=\|T\|^{2}(f+g)$ and $f \in \mbox{ran~}T^*$ implies that
\[
\|T\|^{2}(f+g) = Tf + T T^* g = f + g + T T^* g,
\]
and hence $T T^* g = (\|T\|^{2}-1) (f+g)$. Then $P_{\mbox{ran~}T^*} (T T^* g) = (\|T\|^{2}-1) f$, which, along with $P_{\mbox{ran~}T^*} Tf = f $ implies that $P_{\mbox{ran~}T^*} T (T^* g - (\|T\|^{2}-1) f) = 0$. Thus
\[
T (T^* g - (\|T\|^{2}-1) f) \in (\mbox{ran~}T^*)^\perp = \ker~ T,
\]
and hence $T (T^* g - (\|T\|^{2}-1) f)= 0$ as $T^2 = T$. Then $T^* g - (\|T\|^{2}-1) f \in (\mbox{ran~}T^*)^\perp$, where, on the other hand, $f \in \mbox{ran~}T^*$ implies that $T^* g - (\|T\|^{2}-1) f \in \mbox{ran~}T^*$. This is possible only when
\[
T^* g - (\|T\|^{2}-1) f = 0,
\]
which, along with $P_{(\mbox{ran~}T^*)^{\perp}} T f = g$, implies $T^* P_{(\mbox{ran~}T^*)^{\perp}} T f = (\|T\|^{2}-1) f$.

\noindent Conversely, assume $Tf \neq 0$ for some $f \in \cle$, and assume that $P_{\mbox{ran~}T^*} T f = f$ and $T^* P_{(\mbox{ran~}T^*)^{\perp}} T f = (\|T\|^{2}-1) f$. Set $g = P_{(\mbox{ran~}T^*)^{\perp}} T f$. Then $Tf = f + g$ and $T^* g = (\|T\|^{2}-1) f$. Since
\[
TT^*T f = TT^*f + TT^*g = Tf + TT^*g,
\]
we have $TT^*Tf = \|T\|^2 T f$. Then Theorem \ref{thm: all results} implies that $T \in \NA$, and completes the proof of the theorem.
\end{proof}

In particular, an idempotent operator $T$ is in $\NA$ if and only if $1$ and $\|T\|^2-1$ are eigenvalues of $P_{\mbox{ran~}T^*} T$ and $T^*P_{(\mbox{ran~}T^*)^{\perp}}T$, respectively, corresponding to a common eigenvector.

\section{Model Operators}\label{sec:MO}

In this section we will treat model operators that attain the norm. We begin with some standard terminology. Let $\bd$ be the open unit disc in $\mathbb{C}$, and let $\cle$ be a Hilbert space. The $\cle$-valued \textit{Hardy space} $H^2_\cle(\bd)$ (or $H^2(\bd)$ if $\cle = \mathbb{C}$ ) over $\bd$ is the Hilbert space of all $\cle$-valued analytic functions $f = \sum\limits_{n=0}^{\infty} a_n z^n$, $a_n \in \cle$, on $\bd$ such that
\[
\|f\| = (\sum_{n=0}^{\infty} \|a_n\|^2)^{\frac{1}{2}} < \infty.
\]
Given another Hilbert space $\cle_*$, we denote by $H^\infty_{\mathscr{B}(\cle_*, \cle)}(\bd)$ (or $H^\infty_{\mathscr{B}(\cle)}(\bd)$ if $\cle_* = \cle$) the set of $\mathscr{B}(\cle_*, \cle)$-valued bounded analytic functions on $\bd$. Also recall that a function $\Theta\in H^\infty_{\mathscr{B}(\cle_*, \cle)}(\bd)$ is called \textit{inner} if $\Theta(z)$ (via radial limits) is an isometry for all $z$ a.e. in $\bt$. If $\cle = \cle_* = \mathbb{C}$, then $H^\infty_{\mathscr{B}(\cle_*, \cle)}(\bd)$ will be denoted by $H^\infty(\bd)$. In particular, a function $\vp \in H^\infty(\bd)$ is inner if and only if $|\vp(z)| = 1$ for all $z$ a.e. in $\bt$.

Let $\cle$ be a Hilbert space, and let $\clq \subseteq H^2_{\cle}(\bd)$ be a closed subspace. We say that $\clq$ is a \textit{Sz.-Nagy and Foia\c{s} model space} if $\clq$ is $M_z^*$-invariant. In this case, the \textit{Sz.-Nagy and Foia\c{s} model operator} $S_\clq$ is the compression of $M_z$ on $\clq$, that is
\[
S_\clq = P_{\clq} M_z|_{\clq}.
\]
If $\cle = \mathbb{C}$, we simply say that $\clq$ is a \textit{model space} and $S_\clq$ is a \textit{model operator}. In this paper we always assume that $\{0\} \subsetneq \clq \subsetneq H^2_{\cle}(\bd)$. Observe that Sz.-Nagy and Foia\c{s} model operators essentially represents the set of all contractions $T$ such that $T^{*n} \raro 0$ in the strong operator topology \cite{NF}.

\begin{proposition}\label{prop: NA Q}
Suppose $\clq \subseteq H^2_{\cle}(\bd)$ be a Sz.-Nagy and Foia\c{s} model space. Then there exists a non-zero vector $f\in \clq$ such that $f(0)=0$ if and only if $S_\clq \in \NA$ and $\|S_\clq\|=1$.
\end{proposition}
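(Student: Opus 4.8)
The plan is to reduce norm-attainment of $S_\clq$ to the purely algebraic condition that multiplication by $z$ maps some non-zero vector of $\clq$ back into $\clq$. First I would record the elementary bound $\|S_\clq\|\le 1$: since $M_z$ is an isometry on $H^2_{\cle}(\bd)$, for every $f\in\clq$ we have $\|S_\clq f\| = \|P_\clq M_z f\| \le \|M_z f\| = \|f\|$. Next, using the orthogonal decomposition $M_z f = P_\clq M_z f \oplus (I-P_\clq)M_z f$ together with $\|M_z f\| = \|f\|$, I would extract the key equivalence: for a non-zero $f\in\clq$, one has $\|S_\clq f\| = \|f\|$ if and only if $(I-P_\clq)M_z f = 0$, that is, if and only if $zf = M_z f \in \clq$. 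Combining these observations, $S_\clq\in\NA$ with $\|S_\clq\| = 1$ if and only if there is a non-zero $f\in\clq$ with $zf\in\clq$.

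With this reformulation both directions become immediate. For the forward implication, suppose $g\in\clq$ is non-zero with $g(0)=0$. Then $g = zh$ for some $h\in H^2_{\cle}(\bd)$; concretely $h = M_z^* g$ (using $M_z^* M_z = I$), and $h\neq 0$ because $g\neq 0$. Since $\clq$ is $M_z^*$-invariant we have $h\in\clq$, and $zh = g\in\clq$, so the equivalence above gives $\|S_\clq h\| = \|h\|\neq 0$; hence $S_\clq\in\NA$, and together with $\|S_\clq\|\le 1$ this forces $\|S_\clq\| = 1$. For the converse, if $S_\clq\in\NA$ and $\|S_\clq\| = 1$, choose a non-zero $f\in\clq$ with $\|S_\clq f\| = \|f\|$; then $g := zf$ lies in $\clq$ by the equivalence, is non-zero since $f\neq 0$, and satisfies $g(0)=0$, as desired.

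I do not expect any serious obstacle here. The only step that requires a moment of care is the equality analysis in the first paragraph — one must use the Pythagorean identity to see that $\|P_\clq M_z f\| = \|M_z f\|$ genuinely forces $M_z f\in\clq$, not merely that $M_z f$ is approximated by $\clq$ — together with the identity $M_z^*(zh) = h$, which is what guarantees that the vector $h$ exhibited in the forward direction actually lies in $\clq$. I would also remark that the standing assumption $\{0\}\subsetneq\clq\subsetneq H^2_{\cle}(\bd)$ is not needed for this proposition.
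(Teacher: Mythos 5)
Your proof is correct and follows essentially the same route as the paper's: both reduce norm attainment at a vector $h$ to the condition $zh\in\clq$ via the equality case of $\|P_\clq M_z h\|\le\|M_z h\|$, and both use $M_z^*$-invariance of $\clq$ to pull the factor $h=M_z^*g$ back into $\clq$ in the forward direction. Your side remark that the standing assumption $\{0\}\subsetneq\clq\subsetneq H^2_{\cle}(\bd)$ is not needed here is also accurate.
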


\begin{proof}
If we set $\clc := \{h \in \clq: \|S_\clq h\|=\|h\| \}$, then
\[
\clc =\{ h\in \clq: zh \in \clq\}.
\]
Indeed, if $h \in \clc$, then
\[
\|h\| = \|S_\clq h\| = \|P_{\clq} M_z h\| \leq \|M_z h\| = \|h\|,
\]
and hence $\|P_{\clq} (zh)\| = \|zh\|$. This implies that $zh \in \clq$, as $P_{\clq}$ is an orthogonal projection of $H^2_{\cle}(\bd)$ onto $\clq$. On the other hand, if $h \in \clq$ and $zh \in \clq$, then clearly
\[
\|S_\clq h\| = \|P_{\clq} zh\| = \|M_zh\| = \|h\|.
\]
Now, let $\|S_\clq\|=1$ and $S_\clq \in \NA$. This implies that $\clc \neq \{0\}$. Then there exists a non-zero vector $h \in \clq$ such that $f:=zh \in \clq_\theta$. Clearly, $f(0) = 0$. Conversely, let $f \in \clq$ be a non-zero vector such that $f(0) = 0$. Then $f= zg \in \clq$ for some $g \in H^2_{\cle}(\bd)$. Since $\clq$ is $M_z^*$-invariant, it follows that $g = M_z^* f \in \clq$. Then
\[
\|S_\clq g\|=\|P_{\clq}(zg)\|=\|zg\|=\|g\|,
\]
which implies that $\|S_\clq\| = 1$ and $S_\clq \in \NA$.
\end{proof}

We now concentrate on the special case when $\clq$ is a model space, that is, $\clq \subseteq H^2(\bd)$. First, we recall that the Hardy space $H^2(\bd)$ is also a reproducing kernel Hilbert space corresponding to the Szeg\H{o} kernel $c$ on $\bd$, where
\[
c(z, w) = (1 - z \bar{w})^{-1} \qquad (z, w \in \bd).
\]
Then the set of \textit{kernel functions} $\{c(\cdot, w): w \in \bd\}$ forms a total set in $H^2(\bd)$, and satisfies the \textit{reproducing property} $f(w) = \langle f, c(\cdot, w)\rangle_{H^2(\bd)}$, for all $w \in \bd$ and $f \in H^2(\bd)$. Now suppose that $\clq$ is a model space. Then the classical Beurling theorem yields
\[
\clq = \clq_\theta: = H^2(\bd)/ \theta H^2(\bd),
\]
where $\theta \in H^\infty(\bd)$ is an inner function (which is unique up to multiplication by a scalar of modulus one). Then the corresponding model operator $S_\clq$, denoted by $S_\theta$, is given by $S_\theta = P_{\clq_\theta} M_z|_{\clq_{\theta}}$, where $P_{\clq_\theta}$ is the orthogonal projection of $H^2(\bd)$ onto $\clq_\theta$. One can easily prove that
\[
c_\theta(z,w) = \frac{1 - \theta(z) \overline{\theta(w)}}{1 - z \bar{w}} \qquad (z, w \in \bd),
\]
defines the reproducing kernel function of $\clq_\theta$. In particular
\[
c_\theta(z, 0) = 1 - \theta(z) \overline{\theta(0)},
\]
and hence
\[
\|c_\theta(\cdot, 0)\| = (1 - |\theta(0)|^2)^\frac{1}{2}.
\]
The following result complements Proposition \ref{prop: NA Q}:

\begin{proposition}\label{NA Model Op}
Let $\theta \in H^\infty(\bd)$ be inner. Then $\|S_\theta\|=|\theta(0)|$ if and only if $\|S_\theta\|<1$ and $S_\theta \in \NA.$
\end{proposition}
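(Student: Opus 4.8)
The plan is to compute $S_\theta S_\theta^*$ explicitly and then invoke Theorem~\ref{thm: all results}, which reduces norm attainment of $S_\theta$ to the statement that $\|S_\theta\|^2$ is an eigenvalue of $S_\theta S_\theta^*$. First I would record that, since $\clq_\theta$ is $M_z^*$-invariant, $S_\theta^* = M_z^*|_{\clq_\theta}$, and combine this with the elementary identity $M_z M_z^* = I - (1\otimes 1)$ on $H^2(\bd)$, where $1\otimes 1$ is the rank-one orthogonal projection $h \mapsto h(0)\cdot 1$ onto the constants. For $h\in\clq_\theta$ this gives $S_\theta S_\theta^* h = P_{\clq_\theta}\bigl(h - h(0)\cdot 1\bigr) = h - h(0)\,P_{\clq_\theta}1$. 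The one computational fact I need is $P_{\clq_\theta}1 = c_\theta(\cdot,0) = 1 - \overline{\theta(0)}\theta$, which follows from the reproducing property of $c_\theta(\cdot,0)$ in $\clq_\theta$ (equivalently, from the orthogonal decomposition $1 = (1-\overline{\theta(0)}\theta) + \overline{\theta(0)}\theta$ along $\clq_\theta \oplus \theta H^2(\bd)$). Hence
\[
(I - S_\theta S_\theta^*)h = h(0)\,c_\theta(\cdot,0) = \langle h, c_\theta(\cdot,0)\rangle\, c_\theta(\cdot,0) \qquad (h\in\clq_\theta),
\]
so $I - S_\theta S_\theta^*$ is the positive rank-one operator $c_\theta(\cdot,0)\otimes c_\theta(\cdot,0)$; its unique nonzero eigenvalue is $\|c_\theta(\cdot,0)\|^2 = 1 - |\theta(0)|^2$, with eigenvector $c_\theta(\cdot,0)$, which is nonzero because $|\theta(0)|<1$ ($\theta$ is inner and $\clq_\theta\neq\{0\}$, so $\theta$ is not a unimodular constant). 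In particular $S_\theta S_\theta^* c_\theta(\cdot,0) = |\theta(0)|^2 c_\theta(\cdot,0)$, while $S_\theta S_\theta^* = I$ on $\clq_\theta \ominus \mathbb{C}\,c_\theta(\cdot,0)$.

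With this in hand, for the forward direction I would argue: if $\|S_\theta\| = |\theta(0)|$, then $\|S_\theta\| < 1$ (again because $|\theta(0)|<1$), and $\|S_\theta\|^2 = |\theta(0)|^2$ lies in the point spectrum of $S_\theta S_\theta^*$ by the displayed computation, so Theorem~\ref{thm: all results} yields $S_\theta \in \NA$. For the converse, if $\|S_\theta\| < 1$ and $S_\theta \in \NA$, then Theorem~\ref{thm: all results} produces a nonzero $h\in\clq_\theta$ with $S_\theta S_\theta^* h = \|S_\theta\|^2 h$, hence $(I - S_\theta S_\theta^*)h = (1-\|S_\theta\|^2)h$ with $1-\|S_\theta\|^2 > 0$; plugging this into the rank-one formula forces $\langle h, c_\theta(\cdot,0)\rangle \neq 0$, so $h$ is a nonzero scalar multiple of $c_\theta(\cdot,0)$, and comparing coefficients gives $1-\|S_\theta\|^2 = \|c_\theta(\cdot,0)\|^2 = 1-|\theta(0)|^2$, i.e. $\|S_\theta\| = |\theta(0)|$.

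I do not expect a serious obstacle: the whole argument rests on the single observation that $I - S_\theta S_\theta^*$ is rank one and is controlled by the reproducing kernel of $\clq_\theta$ at the origin, after which both implications are mechanical applications of Theorem~\ref{thm: all results}. The only point needing a little care is the identification $P_{\clq_\theta}1 = c_\theta(\cdot,0)$ together with the remark that $c_\theta(\cdot,0)\neq 0$ (equivalently $|\theta(0)|<1$) whenever $\clq_\theta$ is a genuine, i.e.\ nonzero and proper, model space; this is also what underlies the dichotomy behind the statement, namely $\|S_\theta\|=1$ precisely when $\dim\clq_\theta\geq 2$ and $\|S_\theta\|=|\theta(0)|<1$ exactly when $\clq_\theta$ is one-dimensional.
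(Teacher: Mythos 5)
Your proposal is correct and follows essentially the same route as the paper: both rest on the identity $S_\theta S_\theta^* = I_{\clq_\theta} - c_\theta(\cdot,0)\otimes c_\theta(\cdot,0)$ combined with Theorem~\ref{thm: all results}, with only a cosmetic difference in the converse (you exploit the rank-one structure to identify the eigenvector as a multiple of $c_\theta(\cdot,0)$, whereas the paper pairs the eigenvalue equation against $c_\theta(\cdot,0)$). Your explicit derivation of the rank-one identity via $M_zM_z^* = I - 1\otimes 1$ and $P_{\clq_\theta}1 = c_\theta(\cdot,0)$ fills in the step the paper labels ``easy to see.''
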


\begin{proof}
Suppose $\|S_\theta\|<1$ and $S_\theta \in \NA$. Then, by Theorem \ref{thm: all results}, there exists a non-zero vector $f \in \clq_\theta$ such that
\[
(\|S_\theta\|^2 I_{\clq_\theta} - S_\theta S_\theta ^*)f = 0.
\]
It is easy to see that $S_\theta S_\theta^* = I_{\clq_\theta} - c_\theta(\cdot, 0) \otimes c_\theta(\cdot, 0)$. Then
\begin{equation}\label{eqn: proof 1}
(\|S_\theta\|^2-1) f + \langle f, c_\theta(\cdot, 0) \rangle c_\theta(\cdot, 0) = 0.
\end{equation}
Taking inner product with $c_\theta(\cdot, 0)$, we have
\[
(\|S_\theta\|^2 - 1) \langle f, c_\theta(\cdot, 0) \rangle + \left(1- |\theta (0)|^2 \right) \langle f, c_\theta(\cdot, 0) \rangle =0,
\]
as $\|c_\theta(\cdot, 0)\|^2 = 1 - |\theta(0)|^2$. Note that since $\|S_\theta\|<1$ and $f\neq 0$, \eqref{eqn: proof 1} implies that $\langle f, c_\theta(\cdot, 0) \rangle \neq 0$. Then we have $\|S_\theta\|^2-1+ \left(1- |\theta (0)|^2 \right)=0$, and hence $\|S_\theta\| = |\theta(0)|$. Conversely, if $\|S_\theta\|=|\theta(0)|$, then
\begin{align*}
(\|S_\theta\|^2 I_{\clq_\theta} - S_\theta S_\theta ^*) c_\theta(\cdot, 0) & = \Big(|\theta(0)|^2 I_{\clq_\theta} - \left(I_{\clq_\theta}- c_\theta(\cdot, 0) \otimes c_\theta(\cdot, 0) \right) \Big)c_\theta(\cdot, 0)
\\
&= |\theta(0)|^2 c_\theta(\cdot, 0) - c_\theta(\cdot, 0) + \|c_\theta(\cdot, 0)\|^2 c_\theta(\cdot, 0)
\\
&= (|\theta(0)|^2 - 1 + (1-|\theta(0)|^2)) c_\theta(\cdot, 0)
\\
&=0,
\end{align*}
that is, $\|S_\theta\|^2$ is in the point spectrum of $S_\theta S_\theta ^*$. This along with Theorem \ref{thm: all results} shows that $S_\theta \in \NA$. Finally, since $\theta$ is inner, by the maximum modulus principle we conclude that $1> |\theta(0)| = \|S_\theta\|$. This completes the proof.
\end{proof}

We now proceed to the most definite result of this section. For any $\lambda \in \bd$, we denote by $b_{\lambda}$ the \textit{Blaschke factor} corresponding to $\lambda$, that is
\[
b_{\lambda}(z) = \frac{z-\lambda}{1 - \bar{\lambda} z} \qquad (z \in \bd).
\]

\begin{theorem}\label{thm: norm 1}
Let $\theta \in H^\infty(\bd)$ be an inner function. Then $S_\theta \in \NA$. Moreover, $\|S_\theta\| = 1$ if and only if $\mbox{dim} \clq_\theta > 1$.
\end{theorem}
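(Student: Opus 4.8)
The plan is to deduce everything from Proposition~\ref{prop: NA Q} together with the reproducing-kernel description of $\clq_\theta$, and then to treat the degenerate one-dimensional case separately, since there Proposition~\ref{prop: NA Q} alone does not quite finish the job.

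First I would record that $\{0\}\subsetneq\clq_\theta$ forces $\theta$ to be non-constant, so by the maximum modulus principle $|\theta(0)|<1$ and hence $\|c_\theta(\cdot,0)\|^2=1-|\theta(0)|^2>0$; in particular $c_\theta(\cdot,0)\neq 0$. Using the reproducing property $f(0)=\langle f,c_\theta(\cdot,0)\rangle$ valid for $f\in\clq_\theta$, the functions in $\clq_\theta$ vanishing at the origin are precisely those in $\clq_\theta\ominus\mathbb{C}\,c_\theta(\cdot,0)$. Consequently there is a non-zero $f\in\clq_\theta$ with $f(0)=0$ if and only if $\dim\clq_\theta\geq 2$. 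By Proposition~\ref{prop: NA Q}, the existence of such an $f$ is equivalent to the conjunction ``$S_\theta\in\NA$ and $\|S_\theta\|=1$''. Hence $\dim\clq_\theta>1$ immediately gives both $S_\theta\in\NA$ and $\|S_\theta\|=1$.

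It then remains to handle the case $\dim\clq_\theta=1$. Here $S_\theta$ acts on a one-dimensional Hilbert space, so it is automatically norm-attaining (any non-zero vector attains the norm). Moreover, if $\|S_\theta\|$ were equal to $1$, then --- $S_\theta\in\NA$ being automatic --- Proposition~\ref{prop: NA Q} would produce a non-zero $f\in\clq_\theta$ with $f(0)=0$, contradicting $\dim\clq_\theta=1$ by the first step; and since $S_\theta$ is a contraction ($M_z$ is an isometry and $P_{\clq_\theta}$ an orthogonal projection), we conclude $\|S_\theta\|<1$. Combining the two cases yields $S_\theta\in\NA$ unconditionally, together with the dichotomy $\|S_\theta\|=1\iff\dim\clq_\theta>1$.

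The only genuinely delicate point I anticipate is precisely this one-dimensional case: Proposition~\ref{prop: NA Q} tells us only that $S_\theta$ cannot be simultaneously norm-attaining and of norm one, so norm-attainment still has to be secured separately --- which is immediate once the underlying space is one-dimensional --- and it is this that then forces the strict inequality $\|S_\theta\|<1$. One should also be careful to use the standing assumption $\clq_\theta\subsetneq H^2(\bd)$ (equivalently, $\theta$ non-constant) to guarantee $c_\theta(\cdot,0)\neq 0$; otherwise the identification of $\{f\in\clq_\theta:f(0)=0\}$ in the first step would carry no information. As a consistency check, when $\theta=b_\lambda$ one has $\clq_\theta=\mathbb{C}\,c(\cdot,\lambda)$ and $S_\theta$ is multiplication by $\lambda$, so $\|S_\theta\|=|\lambda|=|\theta(0)|<1$, in agreement with the statement.
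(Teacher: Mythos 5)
Your proof is correct, and it takes a genuinely different --- and considerably shorter --- route than the paper's. The paper proves the theorem by a case analysis on the canonical factorization of $\theta$: one-dimensional $\clq_\theta$, finite Blaschke products (with distinct or repeated zeros), infinite Blaschke products, singular inner functions via Frostman's theorem, and finally the general case via the inclusion $\clq_{\theta_s} \subseteq \clq_\theta$; in each case an explicit non-zero function vanishing at the origin is exhibited (a difference of kernel functions, a derivative of a kernel function, or a suitable combination of $c_\theta(\cdot,\lambda_1)$ and $c_\theta(\cdot,\lambda_2)$). You instead make the single structural observation that, by the reproducing property, $\{f \in \clq_\theta : f(0)=0\} = \clq_\theta \ominus \mathbb{C}\, c_\theta(\cdot,0)$, and that $c_\theta(\cdot,0) \neq 0$ since $\|c_\theta(\cdot,0)\|^2 = 1-|\theta(0)|^2 > 0$ for non-constant inner $\theta$; hence this subspace is non-trivial exactly when $\dim\clq_\theta \geq 2$, and Proposition \ref{prop: NA Q} (used in both directions) does the rest. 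Your handling of the one-dimensional case --- norm-attainment is automatic on a finite-dimensional space, and then the ``only if'' direction of Proposition \ref{prop: NA Q} forces $\|S_\theta\| < 1$ --- is also correct and matches the paper's conclusion $\|S_\theta\| = |\theta(0)| < 1$ there. What your argument gives up is only the explicit norm-attaining vectors and the illustration of how the Blaschke/singular structure of $\theta$ enters; what it buys is the elimination of Frostman's theorem and the entire factorization-based case split. This is a clean improvement on the published argument.
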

\begin{proof}
First we consider $\mbox{dim} \clq_\theta < \infty$. Suppose $\mbox{dim} \clq_\theta = 1$. Then there exists $\lambda \in \bd$ such that $\theta = b_{\lambda}$ and $\clq_\theta = \mathbb{C} c(\cdot, \lambda)$. Then $S_\theta^* c(\cdot, {\lambda}) = \bar{\lambda} c(\cdot, \lambda)$ implies that
\[
\|S_\theta\|= |\lambda|=|\theta(0)| < 1.
\]
Now let $\mbox{dim} \clq_\theta = n (>1)$. Then $\theta = \prod\limits_{i=1}^{n} b_{\lambda_i}$ for some $\{\lambda_i\}_{i=1}^n \subseteq \bd$. Suppose $\lambda_p \neq \lambda_q$ for some $p \neq q$ and $1 \leq p,q \leq n$, and suppose
\[
f = c(\cdot, {\lambda_p}) - c(\cdot, {\lambda_q}).
\]
If $\lambda = \lambda_i$ for all $i =1, \ldots, n$, then we consider $f$ as
\[
f = \overline{\lambda}c(\cdot, \lambda) - \partial c(\cdot, {\lambda}),
\]
where the partial derivative is with repect to the first variable.
In either case, $f \in \clq_\theta$ and $f(0) = 0$, and thus, by Proposition \ref{prop: NA Q}, $\|S_\theta\| = 1$ and $S_\theta \in \NA$ (of course, the latter conclusion is trivial as $S_\theta$ is a finite rank operator).

\noindent We now consider the infinite dimensional case. Assume first that $\theta$ is an infinite Blaschke product, that is
\[
\theta(z)=\prod_{n=1}^{\infty} \alpha_n b_{\lambda_n} (z) \qquad (z \in \bd),
\]
where $\sum_{k=1}^{\infty} (1-|\lambda_k|)< \infty$ and $|\alpha_n|=1$, for all $n \geq 1$. Then $\clq_\theta= \overline{\text{span}} \{ c(\cdot, {\lambda_n}): n \geq 1\}$. Since $S_\theta^* c(\cdot, {\lambda_m}) = \bar{\lambda}_m c(\cdot, {\lambda_m})$ for all $m \geq 1$, and $|\lambda_n| \rightarrow 1$ as $n \rightarrow \infty$, it follows that
\[
1 = \underset{n\in \mathbb{N}}{\sup} |\lambda_n | \leq \|S_\theta\| \leq 1,
\]
which implies that $\|S_\theta\|=1$. Again we set $f = c(\cdot, {\lambda_p}) - c(\cdot, {\lambda_q})$ or $f = \overline{\lambda}c(\cdot, \lambda) - \partial c(\cdot, {\lambda})$, as above, and conclude that $f(0) = 0$ for some $f \in \clq_\theta$. Then Proposition \ref{prop: NA Q} again implies that $S_\theta \in \NA$.

\noindent Now assume that $\theta$ is a singular inner function. By Frostman's theorem, there exist distinct $\lambda_1$ and $\lambda_2$ in $\bd$ such that $c_\theta(\cdot, \lambda_1), c_\theta(\cdot, \lambda_2) \in \clq_\theta$. Set
\[
f= \Big(1 - \overline{\theta(\lambda_2)} \theta(0)\Big) c_\theta(\cdot, \lambda_1) - \Big(1 - \overline{\theta(\lambda_1)} \theta(0)\Big) c_\theta(\cdot, \lambda_2).
\]
Then $f  \in \clq_\theta$ and $f(0)=0$, as $c_\theta(0, w) = 1 - \theta(0) \overline{\theta(w)}$, $w \in \bd$. By Theorem \ref{NA Model Op} it follows that $S_\theta \in \NA$ and $||S_\theta\|=1$.

\noindent Finally, let $\theta= \theta_b \theta_s$, where $\theta_b$ is the Blaschke product formed by the zeros of $\theta$ and $\theta_s$ is the corresponding singular factor of $\theta$. Since $\theta_s | \theta$, it easily follows that (by the Douglas range inclusion theorem)
\[
\clq_{\theta_s} \subseteq \clq_\theta.
\]
We can then use the singular inner function part above to find a $f \in \clq_{\theta_s} \subseteq \clq_\theta$ such that $f(0)=0$. Consequently, $S_\theta \in \NA$ and $\|S_\theta\|=1$. This completes the proof of the theorem.
\end{proof}

\begin{remark}\label{rem}
The latter conclusion of Theorem \ref{thm: norm 1} is not new, and it essentially follows from \cite[Section 7, Corollary 3]{GR}. In fact, \cite[Section 7]{GR} deals with the problem of norm attaining symbols of truncated Toeplitz operators: given $\vp \in L^\infty(\bt)$, when does $\|A_\vp^\theta\|_{\mathscr{B}(\clq_\theta)} = \|\vp\|_{\infty}$, where $A_\vp^\theta = P_{\clq_\theta} L_\vp|_{\clq_\theta}$ (see Section \ref{sec: TO}) is the truncated Toeplitz operator. This problem, a priori, is different from our norm attaining operators. On the other hand, our approach, like divisibility of functions in model spaces as in the proof of Proposition \ref{prop: NA Q}, certainly relies on classical technique as in \cite{BD} and \cite{GR}.
\end{remark}

\section{Toeplitz Operators}\label{sec: TO}

In this section we consider Toeplitz and Laurent operators that admit the norm. We first introduce the classical vector-valued Hilbert  measure spaces. Suppose $\cle$ is a Hilbert space. Let $L^2_\cle(\bt)$ (here $\bt = \partial \bd$) denote the Hilbert space of all square $\cle$-valued (Lebesgue) integrable functions on $\bt$, that is
\[
L^2_\cle(\bt) = \left \{ f:\bt \rightarrow \cle \text{ measurable}: \|f\|= \Big[\int_{\bt} \|f(z)\|^2_\cle\, dm(z)\Big]^{\frac{1}{2}} <\infty \right \},
\]
where $m$ is the normalized Lebesgue measure on $\bt$. The Hardy space $H^2_\cle(\bd)$ also can be identified (via radial limits) to the subspace (which we will denote again by $H^2_\cle(\bd)$) of $\cle$-valued functions $f$ in $L^2_\cle(\bt)$ such that $\hat{f}(n) = 0$, $n<0$, where $\hat{f}(n)$ is the $n$-th Fourier coefficient of $f$. Given another Hilbert space $\cle_*$, we denote by $L^\infty_{\mathscr{B}(\cle_*, \cle)}(\bt)$ (or $L^\infty_{\mathscr{B}(\cle)}(\bt)$ if $\cle_* = \cle$) the set of $\mathscr{B}(\cle_*, \cle)$-valued bounded functions on $\bt$.

Now we turn to the main content of this section. We begin with Laurent operators: Let $ \cle_*$ and $\cle$ be Hilbert spaces. For $\Phi \in L^\infty_{\mathscr{B}(\cle_*, \cle)}(\bt)$, the \textit{Laurent operator} $L_\Phi : L^2_{\cle_*}(\bt) \rightarrow L^2_\cle(\bt)$ is defined by $(L_\Phi f)(z) = \Phi(z) f(z)$, $z \in \bt$. In this case, $L_\Phi$ is bounded and $\|L_\Phi\| = \|\Phi\|_{\infty}$. The \textit{Toeplitz operator} $T_\Phi : H^2_{\cle_*} (\bd) \rightarrow H^2_\cle (\bd)$ with (operator-valued) symbol $\Phi$ is defined by
\[
T_\Phi = P_{H^2_\cle (\bd)}L_\Phi|_{H^2_{\cle_*} (\bd)},
\]
where $P_{H^2_\cle (\bd)}$ is the orthogonal projection of $L^2_\cle(\bt)$ onto $H^2_\cle (\bd)$. In particular, when $\cle_*=\cle$, $L_z$ is the bilateral shift, and $T_z = P_{H^2_\cle (\bd)}L_z|_{H^2_\cle (\bd)} = M_z$, as the symbol $z$ is analytic. It is well known that $\|T_\Phi\|=\|\Phi\|_\infty$ (cf. \cite[Theorem 1.7, page 112]{Hari}).

The following theorem provides a complete characterization of norm attaining operator valued Toeplitz operators.

\begin{theorem}\label{Toeplitz Operator Valued}
Let $\Phi \in L^\infty_{\mathscr{B}(\cle)}(\bt)$, and suppose $\|T_\Phi\|=1$. Then $T_\Phi \in \NA$ if and only if there exist a Hilbert space $\cle_*$ and inner functions $\Theta , \Psi \in H^\infty_{\mathscr{B}(\cle_*, \cle)}(\bd)$ such that $\Theta = \Phi \Psi$. Moreover, in this case $T_\Theta = T_\Phi T_\Psi$.	
\end{theorem}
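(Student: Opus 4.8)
The plan is to use Theorem \ref{thm: all results}, so that $T_\Phi \in \NA$ is equivalent to the existence of a non-zero $f \in H^2_\cle(\bd)$ with $T_\Phi^* T_\Phi f = f$ (recall $\|T_\Phi\| = 1$, and one can work with $T_\Phi^* T_\Phi$ just as with $T_\Phi T_\Phi^*$). First I would unwind what the eigenvalue equation $\|T_\Phi f\| = \|f\|$ says: since $\|T_\Phi f\| \le \|L_\Phi f\| = \|P_{H^2_\cle} L_\Phi f\| \le \|L_\Phi f\| \le \|\Phi\|_\infty \|f\| = \|f\|$, equality forces both $L_\Phi f \in H^2_\cle(\bd)$ (the projection loses nothing) and $\|\Phi(z) f(z)\|_\cle = \|f(z)\|_\cle$ for a.e.\ $z \in \bt$ (the symbol acts isometrically on the range vector a.e.). So the norm-attaining vectors are exactly the non-zero $f \in H^2_\cle(\bd)$ for which $\Phi f$ again lies in $H^2_\cle(\bd)$ and $\Phi$ is "fiberwise isometric along $f$".

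For the forward direction, given such an $f$, the idea is to build the inner functions by a Beurling-type / Lax--Halmos construction. Let $\Psi$ be the inner function (with some coefficient space $\cle_*$) whose range $\Psi H^2_{\cle_*}(\bd)$ is the smallest shift-invariant subspace of $H^2_\cle(\bd)$ containing $f$ — equivalently, $\Psi H^2_{\cle_*}(\bd) = \overline{\text{span}}\{z^n f : n \ge 0\}$ (if that is all of $H^2_\cle(\bd)$, take $\Psi = I$, $\cle_* = \cle$). Then $f = \Psi g$ for some $g \in H^2_{\cle_*}(\bd)$. Now set $\Theta := \Phi \Psi$. The two things to check are that $\Theta$ is analytic (i.e.\ $\Theta \in H^\infty_{\mathscr{B}(\cle_*,\cle)}(\bd)$) and that it is inner. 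For analyticity: $\Phi \Psi$ maps each $z^n$-translate of $g$ into $H^2_\cle(\bd)$ — indeed $\Phi \Psi (z^n g) = z^n (\Phi f)$ for $n$ large enough that... actually more directly, $\Phi \Psi h \in H^2_\cle(\bd)$ for all $h$ in a dense subset (polynomials times $g$-type vectors), using $L_z$-invariance of $H^2_\cle(\bd)$ and the fact that $\Phi f \in H^2_\cle(\bd)$; combined with $\|\Phi \Psi\|_\infty \le 1$ this gives $\Theta \in H^\infty$. For the inner property: a.e.\ on $\bt$, $\Psi(z)$ is an isometry of $\cle_*$ into $\cle$, and $\Phi(z)$ restricted to $\text{ran}\,\Psi(z)$ is an isometry — this last fact is the delicate point, because the pointwise isometry statement I extracted above only controls $\Phi(z)$ on the single vector $f(z) = \Psi(z) g(z)$, not on all of $\text{ran}\,\Psi(z)$. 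The remedy is to apply the same argument to $z^n f$ for every $n$, and more generally to $p \cdot f$ for polynomials $p$; one shows the norm-attaining set is closed under multiplication by $z$ and hence $\Phi(z)$ is isometric on all of $\overline{\text{span}}\{\Psi(z)\widehat{q}(z)\}$, which is a.e.\ all of $\text{ran}\,\Psi(z)$ by the standard fact that the pointwise ranges of a Beurling inner function reconstruct the invariant subspace. Once $\Phi(z)\Psi(z)$ is an isometry a.e., $\Theta = \Phi\Psi$ is inner. For the converse, if $\Theta = \Phi\Psi$ with $\Theta,\Psi$ inner, pick any unit vector $e \in \cle_*$, take the constant function $e \in H^2_{\cle_*}(\bd)$, and set $f := \Psi e \in H^2_\cle(\bd)$, which is non-zero since $\Psi$ is isometric a.e. Then $\Phi f = \Phi\Psi e = \Theta e \in H^2_\cle(\bd)$ and $\|\Phi(z)f(z)\| = \|\Theta(z)e\| = \|e\| = \|f(z)\|$ a.e., so $\|T_\Phi f\| = \|\Phi f\| = \|f\|$, giving $T_\Phi \in \NA$.

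Finally, for the identity $T_\Theta = T_\Phi T_\Psi$: since $\Psi$ is analytic, $T_\Psi = M_\Psi$ maps $H^2_{\cle_*}(\bd)$ into $H^2_\cle(\bd)$ with $L_\Psi h = \Psi h$ already in $H^2_\cle(\bd)$, so for $h \in H^2_{\cle_*}(\bd)$ we compute $T_\Phi T_\Psi h = P_{H^2_\cle} L_\Phi (\Psi h) = P_{H^2_\cle} (\Phi\Psi h) = P_{H^2_\cle} L_\Theta h = T_\Theta h$, using $\Theta = \Phi\Psi$ as $L^\infty$ functions. I expect the main obstacle to be the inner-ness of $\Theta$ in the forward direction: upgrading the single-vector pointwise isometry $\|\Phi(z)f(z)\| = \|f(z)\|$ to the statement that $\Phi(z)$ is isometric on the whole fiber $\text{ran}\,\Psi(z)$ for a.e.\ $z$. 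This should follow by systematically exploiting that every element of the invariant subspace generated by $f$ is again norm-attaining for $T_\Phi$ (this uses that $T_\Phi$ is a contraction and $L_z$ commutes appropriately), together with the measure-theoretic description of Beurling--Lax inner functions, but it is the step that requires care rather than routine manipulation.
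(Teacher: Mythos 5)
Your overall strategy is the right one and is close to the paper's: characterize the norm-attaining vectors as those $h$ with $\Phi h\in H^2_\cle(\bd)$ and $\|\Phi(z)h(z)\|=\|h(z)\|$ a.e., produce a shift-invariant subspace of such vectors, and apply Beurling--Lax--Halmos to get $\Psi$, then set $\Theta=\Phi\Psi$. The difference is that you apply Beurling--Lax--Halmos to the cyclic subspace $[f]=\overline{\operatorname{span}}\{z^nf:n\ge 0\}$ generated by a single norm-attaining vector, whereas the paper works with the \emph{full} set $\mathcal{C}=\{h:\|T_\Phi h\|=\|h\|\}$, shows it equals $\{h:\Phi h\in H^2_\cle(\bd),\ L_\Phi^*L_\Phi h=h\}$ (hence is a closed subspace), and shows it is $M_z$-invariant. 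The payoff of the paper's choice is that the delicate step you flag evaporates: since \emph{every} $h\in\mathcal{C}=\Psi H^2_{\cle_*}(\bd)$ satisfies $\|L_\Phi h\|=\|h\|$, one gets $\|\Theta k\|_{L^2}=\|\Phi\Psi k\|_{L^2}=\|\Psi k\|_{L^2}=\|k\|_{L^2}$ for all $k\in H^2_{\cle_*}(\bd)$, i.e.\ $M_\Theta$ is a global isometry, and then $\Theta$ is inner by the standard Sz.-Nagy--Foia\c{s} equivalence. No pointwise fiber analysis is needed.

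The gap in your write-up is exactly where you suspect it, and your proposed remedy does not repair it. Testing the pointwise isometry on $p\cdot f$ for scalar polynomials $p$ gives $\|\Phi(z)p(z)f(z)\|=|p(z)|\,\|\Phi(z)f(z)\|$, which is the \emph{same} one-dimensional direction $f(z)$ in each fiber; it yields no new vectors of $\operatorname{ran}\Psi(z)$, so "$\Phi(z)$ is isometric on all of $\operatorname{ran}\Psi(z)$" does not follow from that argument. The correct repair inside your framework is the observation that a singly generated shift-invariant subspace has wandering subspace of dimension one, so your $\cle_*$ is one-dimensional and $f=\Psi g$ with $g$ a scalar \emph{outer} function; then $g(z)\ne 0$ a.e.\ (F.~and M.~Riesz), and $\|\Phi(z)\Psi(z)1\|=\|\Phi(z)f(z)\|/|g(z)|=\|f(z)\|/|g(z)|=1$ a.e.\ already makes $\Theta(z)$ an isometry on the whole (one-dimensional) fiber. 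The outerness of $g$ is also what you need to legitimize the density claim "$\{pg\}$ is dense in $H^2_{\cle_*}(\bd)$" in your analyticity argument for $\Theta$; as written you assert density of polynomial multiples of $g$ without justification. With either repair --- the one-dimensionality observation, or switching to the paper's global-isometry argument on all of $\mathcal{C}$ --- the rest of your proof (the converse direction and the identity $T_\Theta=T_\Phi T_\Psi$) is correct and matches the paper.
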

\begin{proof}
As in the proof of Proposition \ref{prop: NA Q}, we set $\clc = \{ h\in H^2_\cle (\bd): \|T_\Phi h\|=\|h\|\}$. If $h \in \clc$, then
\[
\|h\| = \|P_{H^2_\cle (\bd)} (\Phi h)\| \leq \|\Phi h\| \leq \|h\|,
\]
and hence $\|P_{H^2_\cle (\bd)}(\Phi h)\|=\|\Phi h\|$, or, equivalently, $\Phi h \in H^2_\cle (\bd)$. In particular, $\|L_\Phi h\| = \|\Phi h\| = \|h\|$ implies $\langle (I - L_\Phi^* L_\Phi)h, h \rangle  =0$, and hence $L_\Phi ^* L_\Phi h= h$. As the reverse direction is obvious, we have
\[
\clc = \{h \in H^2_\cle(\bd): \Phi h \in H^2_\cle(\bd) \text{ and } L_\Phi^* L_\Phi h=h\}.
\]
In particular, $\clc$ is a closed subspace of $H^2_\cle(\bd)$. Moreover, if $h \in \clc$ and $f \in L^2_\cle(\bt) \ominus H^2_\cle(\bd)$, then
\[
\|L_\Phi (zh)\| = \|L_\Phi L_z h\| = \|L_\Phi h \| = \|h\| = \|zh\|,
\]
and
\[
\langle \Phi z h, f \rangle_{L^2_\cle(\bt)} = \langle \Phi h, L_z^* f \rangle_{L^2_\cle(\bt)} = 0,
\]
implies that $\clc$ is an $M_z$-invariant subspace of $H^2_\cle(\bd)$.

\noindent Now suppose $T_\phi \in \NA$. Then $\clc \neq \{0\}$, and hence by Beurling, Lax and Halmos theorem \cite[Chapter V, Theorem 3.3]{NF}, there exist a Hilbert space $\cle_*$ and an inner function $\Psi \in  H^\infty_{\mathscr{B}(\cle_*, \cle)}(\bd)$ such that $\clc = \Psi H^2_{\cle_*}(\bd)$. Moreover, $\Phi \clc \subseteq H^2_{\cle}(\bd)$ implies that
\[
\Phi \Psi H^2_{\cle_*}(\bd) \subseteq H^2_{\cle}(\bd).
\]
Evidently, there exists $\Theta \in H^\infty_{\mathscr{B}(\cle_*, \cle)}(\bd)$ such that $\Theta = \Phi \Psi$. Moreover, if $f \in H^2_{\cle_*}(\bd)$, then
\[
\|\Theta f\| = \|\Phi \Psi f\| = \|\Psi f \| = \|f\|,
\]
which implies that $\Theta \in H^\infty_{\mathscr{B}(\cle_*, \cle)}(\bd)$ is an inner function.

\noindent For the converse, observe that since $\Theta$ and $\Psi$ are inner, for each $f \in H^2_{\cle_*}(\bd)$ we have
\[
\|T_\Phi(\Psi f)\| = \| P_{H^2_\cle (\bd)} \Phi \Psi f \| = \| P_{H^2_\cle (\bd)} \Theta f\| = \|\Theta f\| = \|f\| = \|\Psi f\|,
\]
which implies that $T_\Phi \in \NA$.

\noindent The final part is standard: $T_\Theta = T_\Phi T_\Psi$ follows from the fact that  $\Theta$ and $\Psi$ are inner and $\Theta = \Phi \Psi$.
\end{proof}

If $\cle = \mathbb{C}$, then the above theorem reduces to the Brown and Douglas \cite[Lemma 2]{BD}\label{Brown-Douglas lemma} classification of norm attaining Toeplitz operators with scalar-valued symbols:

\begin{corollary}
Let $\vp \in L^\infty(\bt)$ and suppose $\|\vp\|_\infty = 1$. Then $T_\vp \in \NA$ if and only if there exist inner functions $\psi, \theta \in H^\infty(\bd)$ such that $T_\vp = T_\psi^* T_\theta$.
\end{corollary}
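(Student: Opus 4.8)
The plan is to derive this scalar case directly from Theorem \ref{Toeplitz Operator Valued} by unwinding what the operator-valued factorization condition says when $\cle = \mathbb{C}$. First I would observe that in the scalar setting a "Hilbert space $\cle_*$" carrying inner functions $\Psi, \Theta \in H^\infty_{\mathscr{B}(\cle_*, \mathbb{C})}(\bd)$ with $\Theta = \vp \Psi$ is, for all practical purposes, one-dimensional: an inner function valued in $\mathscr{B}(\cle_*, \mathbb{C})$ is a row of functions whose pointwise radial limits are coisometries onto $\mathbb{C}$, i.e. $\sum_j |\psi_j(z)|^2 = 1$ a.e., and one checks that the range space $\Psi H^2_{\cle_*}(\bd)$ is then a nonzero $M_z$-invariant subspace of $H^2(\bd)$, hence (by scalar Beurling) equals $\psi H^2(\bd)$ for a genuine scalar inner function $\psi$. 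So I would replace the operator-valued $\Psi$ by a scalar inner $\psi$ and, correspondingly, $\Theta$ by a scalar inner $\theta$ satisfying $\theta = \vp \psi$.

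Second, I would translate the identity $\theta = \vp\psi$ with $\psi, \theta$ scalar inner into the operator identity claimed in the corollary. Since $\psi$ is inner, $T_\psi$ is an isometry and $T_\psi^* T_\psi = I$, so from $\vp = \theta \bar\psi$ (valid a.e.\ on $\bt$ because $|\psi| = 1$ there) one gets $T_\vp = T_{\theta \bar\psi}$. The key computational point is that for $f \in H^2(\bd)$, $T_{\theta\bar\psi} f = P_{H^2}(\theta \bar\psi f) = P_{H^2}(\theta \overline{\psi} f)$, and because $\theta$ is analytic while $\bar\psi$ is anti-analytic this factors as $T_\theta T_\psi^*$; one verifies $T_\psi^* f = P_{H^2}(\bar\psi f)$ and $T_\theta(P_{H^2}(\bar\psi f)) = P_{H^2}(\theta \bar\psi f)$ using that multiplication by the analytic $\theta$ commutes with $P_{H^2}$ in the appropriate sense. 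This gives $T_\vp = T_\theta T_\psi^* = T_\psi^{*\,*}{}$—more precisely $T_\vp = (T_{\bar\psi})^* {}$—I would just write it cleanly as $T_\vp = T_{\bar\psi}^* T_\theta$ after noting $T_{\bar\psi} = T_\psi^*$, so that the stated form $T_\vp = T_\psi^* T_\theta$ drops out.

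Conversely, if $T_\vp = T_\psi^* T_\theta$ for scalar inner $\psi, \theta$, I would exhibit the norming vector directly: take any nonzero $g \in H^2(\bd)$ and consider $f = \psi g$; then $T_\vp f = T_\psi^* T_\theta (\psi g) = T_\psi^*(\theta \psi g)$, and since $\theta \psi$ is inner (product of inner functions) this has norm $\|g\|$ after applying $T_\psi^*$ appropriately—alternatively, and more in the spirit of the excerpt, I would simply note that setting $\Psi = \psi$ and $\Theta = \theta$ (with $\cle_* = \mathbb{C}$) realizes exactly the factorization hypothesis of Theorem \ref{Toeplitz Operator Valued}, whose converse direction then yields $T_\vp \in \NA$ with no further work. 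Indeed the cleanest writeup routes \emph{both} directions through Theorem \ref{Toeplitz Operator Valued}, using only the scalar Beurling theorem to pass between operator-valued and scalar inner functions, plus the elementary identity $T_\vp = T_\psi^* T_\theta \Longleftrightarrow \theta = \vp\psi$ for scalar inner $\psi,\theta$.

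The main obstacle I anticipate is the reduction from the operator-valued $\Psi$ to a scalar inner function: one must argue that $\Psi H^2_{\cle_*}(\bd)$, which a priori sits inside $H^2_{\mathbb{C}}(\bd) = H^2(\bd)$ as a closed $M_z$-invariant subspace, is nonzero and hence by scalar Beurling of the form $\psi H^2(\bd)$, and then that the induced factorization $\theta = \vp\psi$ survives—i.e.\ that the scalar $\theta$ obtained from $\Theta$ on this subspace is itself inner. This is genuinely just bookkeeping once one recognizes $\mathscr{B}(\cle_*, \mathbb{C})$-valued inner functions as (possibly infinite) rows with $\ell^2$-unimodular radial limits, but it is the step where the "vector-valued machinery collapses to the scalar case" and so deserves a careful sentence or two rather than being waved through.
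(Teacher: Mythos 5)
Your proposal is correct and follows essentially the same route as the paper, which simply specializes Theorem \ref{Toeplitz Operator Valued} by noting that $\cle_*$ may be taken to be $\mathbb{C}$ and that the factorization $\theta = \vp\psi$ of scalar inner functions translates into $T_\vp = T_{\bar\psi}T_{\vp\psi} = T_\psi^* T_\theta$. One small correction to your reduction step: an inner $\Psi \in H^\infty_{\mathscr{B}(\cle_*,\mathbb{C})}(\bd)$ has \emph{isometric} (not coisometric) boundary values, so an isometry of $\cle_*$ into $\mathbb{C}$ already forces $\dim\cle_* = 1$ and the collapse to the scalar case is immediate, although your alternative argument via the scalar Beurling theorem applied to the nonzero $M_z$-invariant subspace $\Psi H^2_{\cle_*}(\bd) \subseteq H^2(\bd)$ reaches the same conclusion.
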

\begin{proof}
In this case, $\cle_* = \mathbb{C}$. The result now follows from the observation that $T_\vp T_\psi = T_\psi T_\vp$.
\end{proof}

Now we turn to norm attaining analytic Toeplitz operators. Let $\cle$ be a Hilbert space, and let $\Phi \in H^\infty_{\mathscr{B}(\cle)}(\bd)$. The \textit{analytic Toeplitz operator} (or \textit{multiplication operator}) $M_\Phi:H^2_\cle(\bd) \rightarrow H^2_\cle(\bd)$ with symbol $\Phi$ is defined by
\[
(M_\Phi f)(z) = \Phi(z) f(z) \qquad (f\in H^2_\cle(\bd), z\in \bd).
\]
It is known that $\|M_\Phi\| = \|\Phi\|_\infty$, and $M_\Phi$ is an isometry if and only if $\Phi$ is inner \cite[Proposition 2.2]{NF}.

\noindent The following vector-valued analogue of F. and M. Riesz theorem is certainly well known, but we have not been able to trace an explicit reference in the literature.

\begin{lemma}\label{Risez thm vector valued}
If $f$ is a non-zero function in $H^2_\cle (\bd)$, then the measure of the set $\left \{z \in \bt: f(z)=0  \right \}$ is zero.
\end{lemma}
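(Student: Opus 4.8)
The plan is to reduce the vector-valued statement to the classical scalar F.~and M.~Riesz theorem (equivalently, to the fact that a nonzero scalar $H^2(\bd)$ function has $\log|g| \in L^1(\bt)$, so its boundary zero set is Lebesgue-null) by pairing $f$ against a single well-chosen vector of $\cle$.

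First I would fix an orthonormal basis $\{e_k\}_{k \geq 1}$ of $\cle$ and, for each $k$, set $f_k(z) := \langle f(z), e_k \rangle_\cle$. Since the Fourier coefficients of $f_k$ are $\widehat{f_k}(n) = \langle \widehat f(n), e_k\rangle_\cle$, which vanish for $n < 0$, each $f_k$ lies in the scalar Hardy space $H^2(\bd)$; moreover $\|f\|^2 = \sum_{k} \|f_k\|^2$ by Parseval. Because $f \neq 0$ in $H^2_\cle(\bd)$, this sum is strictly positive, so there is some index $k_0$ with $f_{k_0} \neq 0$ in $H^2(\bd)$.

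Next I would invoke the scalar result: a nonzero function in $H^2(\bd)$ cannot vanish on a subset of $\bt$ of positive Lebesgue measure (this is exactly the F.~and M.~Riesz theorem, or the integrability of $\log|f_{k_0}|$ on $\bt$). Hence $E_{k_0} := \{z \in \bt : f_{k_0}(z) = 0\}$ has measure zero. Finally, if $f(z) = 0$ then in particular $\langle f(z), e_{k_0}\rangle_\cle = f_{k_0}(z) = 0$, so $\{z \in \bt : f(z) = 0\} \subseteq E_{k_0}$, and the desired conclusion follows.

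The only genuinely delicate point — and the one I would flag as the main thing to get right rather than a true obstacle — is the identification of boundary values: one must check that the radial (nontangential) limit of the scalar function $z \mapsto \langle f(z), e_{k_0}\rangle_\cle$ agrees a.e.\ on $\bt$ with $\langle \widetilde f(z), e_{k_0}\rangle_\cle$, where $\widetilde f$ is the a.e.-defined strong radial-limit function of $f \in H^2_\cle(\bd)$. This is immediate from the continuity of the linear functional $\langle \cdot, e_{k_0}\rangle_\cle$ together with the a.e.\ norm convergence of $f(r\zeta)$ to $\widetilde f(\zeta)$ as $r \to 1$, so that the whole argument goes through with no further complications.
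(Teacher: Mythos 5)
Your proof is correct and follows essentially the same route as the paper: reduce to the scalar F.~and M.~Riesz theorem by pairing $f$ against vectors of $\cle$ (the paper uses all $\eta \in \cle$ and argues by contradiction; you pick a single basis vector with nonzero component, which is a minor, if slightly cleaner, variation). The boundary-value identification you flag is handled the same way in the paper, via continuity of the functional $\langle \cdot, \eta\rangle_\cle$ against the radial limits.
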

\begin{proof}
Let $f \in H^2_\cle (\bd)$ and suppose $E = \{z: f(z) = 0\}$, and $f(z)=\sum_{k=0}^{\infty} a_kz^k$, $z \in \bd$. For each $\eta\in \cle$ we define $f_\eta: \bd \rightarrow \mathbb{C}$ by $f_\eta(z)= \langle f(z), \eta \rangle_{\cle}$, $z \in \bd$. Clearly $f_\eta(z)= \sum\limits_{k=0}^{\infty} \langle a_k,\eta \rangle_\cle z^k$. Hence $f_\eta \in H^2 (\bd)$, as
\[
\sum |\langle a_k, \eta \rangle |^2 \leq \| \eta\|^2 \sum \|a_k\|^2 =  \|\eta\|^2 \|f\|^2 < \infty,
\]
by the Cauchy-Schwarz's inequality. Moreover, if $z \in \bt$, then
\[
f_\eta(z) = \lim_{r\rightarrow 1^-} f_\eta(r z) = \lim_{r\rightarrow 1^-} \langle f(rz), \eta \rangle = \langle f(z), \eta \rangle,
\]
which implies $f_\eta(z) = 0$ on $E$ for all $\eta \in \cle$. If $m(E) > 0$, then by the classical F. and M. Riesz theorem, $f_\eta = 0$ for each $\eta \in \cle$, and hence $f =0$.
\end{proof}

We are now ready to prove the following theorem:

\begin{theorem}\label{mult op Vector valued}
Let $\Phi \in H^\infty_{\mathscr{B}(\cle)}(\bd)$.

(i) If $M_{\Phi} \in \NA$, then $\|{\Phi}(z)\| =\|\Phi\|_{\infty}$, $z \in \mathbb{T}$ a.e.

(ii) If $\cle = \mathbb{C}$, then $M_{\Phi} \in \NA$ if and only if $\frac{1}{\|\Phi\|_{\infty}} \Phi$ is inner.
\end{theorem}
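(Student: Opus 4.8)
\textbf{Proof plan for Theorem \ref{mult op Vector valued}.}

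For part (i), the plan is to exploit the multiplicative structure of $M_\Phi$ and Lemma \ref{Risez thm vector valued}. Suppose $M_\Phi \in \NA$, so there is a non-zero $f \in H^2_\cle(\bd)$ with $\|M_\Phi f\| = \|\Phi\|_\infty \|f\|$. Normalizing, we may assume $\|\Phi\|_\infty = 1$. Then $\int_\bt \|\Phi(z) f(z)\|^2\, dm(z) = \int_\bt \|f(z)\|^2\, dm(z)$, while pointwise $\|\Phi(z)f(z)\| \le \|\Phi(z)\|\,\|f(z)\| \le \|f(z)\|$ for a.e.\ $z \in \bt$. Integrating, the two ends agree, so $\|\Phi(z)f(z)\| = \|f(z)\|$ for a.e.\ $z$; in particular $\|\Phi(z)\| = 1$ for a.e.\ $z$ in the set where $f(z) \ne 0$. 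By Lemma \ref{Risez thm vector valued}, $f(z) \ne 0$ for a.e.\ $z \in \bt$, and hence $\|\Phi(z)\| = 1 = \|\Phi\|_\infty$ for a.e.\ $z \in \bt$.

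For part (ii), take $\cle = \mathbb{C}$ and normalize $\|\Phi\|_\infty = 1$. If $\Phi$ is inner, then $M_\Phi$ is an isometry, so $\|M_\Phi f\| = \|f\|$ for every $f$, and $M_\Phi \in \NA$ trivially. Conversely, if $M_\Phi \in \NA$, then part (i) gives $|\Phi(z)| = 1$ for a.e.\ $z \in \bt$; since $\Phi \in H^\infty(\bd)$, this is precisely the condition that $\Phi$ be inner. (Here we are using the scalar identity $\|\Phi(z)\|_{\mathscr{B}(\mathbb{C})} = |\Phi(z)|$.)

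The only subtle point is the a.e.\ pointwise factorization $\|\Phi(z)f(z)\| = \|\Phi(z)\|\,\|f(z)\|$: this does \emph{not} force $\|\Phi(z)\| = \|\Phi\|_\infty$ at a given point unless $f(z) \ne 0$, which is exactly why Lemma \ref{Risez thm vector valued} is needed — without it, $f$ could conceivably vanish on a positive-measure set and carry no information about $\|\Phi\|$ there. I expect this interplay between the integral equality and the vector-valued F.\ and M.\ Riesz theorem to be the crux; everything else is a routine monotonicity-of-the-integral argument together with the standard fact that a scalar $H^\infty$ function of unimodular boundary values is inner.
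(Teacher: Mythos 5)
Your proposal is correct and follows essentially the same route as the paper: part (i) is the same squeeze between $\int_{\bt}\|\Phi(z)f(z)\|^2\,dm$ and $\int_{\bt}\|\Phi\|_\infty^2\|f(z)\|^2\,dm$ combined with Lemma \ref{Risez thm vector valued} to rule out a positive-measure zero set of $f$, and part (ii) is the same observation that an inner scalar symbol makes $M_\Phi$ (after normalization) an isometry, with the converse supplied by (i). No gaps.
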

\begin{proof}
(i) Suppose $M_{\Phi}$ attains its norm at $f\in H^2_{\cle}(\bd)$. Then $\|M_{\Phi}f\|= \|M_\Phi\| \|f\| = \|\Phi\|_\infty \|f\|$ implies that
\[
\int_{\bt} \|\Phi(z) f(z)\|^2 \, dm(z) = \int_{\bt} \|\Phi\|^2_\infty \|f(z)\|^2 \, dm(z),
\]
and hence
\[
\int_{\bt} \|\Phi\|^2_\infty \|f(z)\|^2 \, dm(z) \geq \int_{\bt} \|\Phi(z)\|^2 \|f(z)\|^2 \, dm(z) \geq \int_{\bt} \|\Phi\|^2_\infty \|f(z)\|^2 \, dm(z),
\]
whence $\int_{\bt} \|\Phi\|^2_\infty \|f(z)\|^2 \, dm(z) = \int_{\bt} \|\Phi(z)\|^2 \|f(z)\|^2 \, dm(z)$. Lemma \ref{Risez thm vector valued} then implies that $\|{\Phi}(z)\| =\|\Phi\|$, $z \in \mathbb{T}$ a.e. as desired.

\noindent (ii) In view of part (i), it is enough to observe that $M_{\frac{1}{\|\Phi\|_{\infty}} \Phi}$ is an isometry whenever  $\frac{1}{\|\Phi\|_{\infty}} \Phi$ is inner.
\end{proof}

The converse of Theorem \ref{mult op Vector valued}(i) does not hold:

\begin{example}\label{not NA eg}
In the setting of Proposition \ref{mult op Vector valued}, consider $\cle =\ell^2(\mathbb{N})$ and the compact operator $K:\ell^2(\mathbb{N})\rightarrow\ell^2(\mathbb{N})$ defined by
\[
K\Big(\{\alpha_n\}_{n=1}^\infty\Big) = \Big\{\alpha_1, \frac{\alpha_2}{2}, \frac{\alpha_3}{3}, \ldots \Big\}.
\]
Note that $I - K \notin \NA$. Indeed, for any non-zero sequence $\{\alpha_n\}_{n=1}^\infty \in \ell^2(\mathbb{N})$, we have
\[ \|(I-K)\{\alpha_1, \alpha_2, \alpha_3,\ldots\}\|^2=\underset{n=1}{\overset{\infty}{\sum}}\left(1-\frac{1}{n}\right)^2 |\alpha_n|^2 < \underset{n=1}{\overset{\infty}{\sum}}|\alpha_n|^2.
\]
Define the constant function  $\Phi: \mathbb{D} \rightarrow \mathscr{B}(\cle)$ by $\Phi(z)=I-K$, $z \in \bd$. Clearly $\|M_\Phi\| = \|\Phi\| = \|I-K\| = 1$. We shall show that $M_{\Phi} \notin \NA$. Suppose towards a contradiction that $M_{\Phi} \in \NA$. Then there exists a non-zero $f$ in $H^2_{\cle}(\mathbb{D})$ such that $\|M_{\Phi}f\|=\|f\|$, and so
\[
\int_{\bt} \Big(\|f(z)\|^2 - \|(I-K)f(z)\|^2 \Big) dm(z) = 0.
\]
Since $\|I-K\|=1$, we have
\[
\|(I-K)f(z)\| = \|f(z)\| \qquad (z \in \bt \, a.e.).
\]
However, $I - K \notin \NA$, which is a contradiction.
\end{example}

\begin{example}\label{eg: back shift}
Now we comment on the inner function property of $\Phi$ in the statement of Theorem \ref{mult op Vector valued}. There, unlike the scalar case, if $\Phi \in H^\infty_{\mathscr{B}(\cle)}(\bd)$ with $\|\Phi\|=1$ and $M_\Phi \in \NA$, then $\Phi$ need not be inner. For instance, consider the backward shift $S^*$ on $\cle=\ell^2(\mathbb{N})$, that is
\[
S^* e_n =
\begin{cases} e_{n-1} & \mbox{if}~ n \geq 2
\\
0 & \mbox{if}~ n=1, \end{cases}
\]
 and define the constant function $\Phi(z) = S^*$, $z \in \bd$.
Of course, $\Phi(z) = S^*$ for all $z \in \bt$, and hence, it follows that $\Phi$ is not inner. On the other hand, define $f \in H^2_{\cle}(\bd)$ by $f(z) = e_2$, $z \in \bd$, where $e_2(i) = \delta_{2,i}$. Then $\|f\|=1$ and
\[
\|M_\Phi f\|^2 =  \int_{\bt} \|\Phi(z) f(z)\|^2 \, dm(z) = \int_{\bt} \|S^*e_2\|^2 \, dm(z) = 1 = \|\Phi\|^2,
\]
and hence $M_\Phi \in \NA$.
\end{example}

We conclude the paper with norm attaining Laurent operators. In this setting, the results and the ideas are similar to that of Theorem \ref{mult op Vector valued}. We first illustrate the scalar case: Let $\vp \in L^\infty(\bt)$. Suppose $f \neq 0$ in $L^2(\bt)$ satisfy $\|L_\vp f\| = \|\vp\|_\infty \|f\|_{L^2(\bt)}$. Then
\[
\left( \|\vp\|^2_\infty - |\vp (z)|^2 \right) |f(z)|^2=0 \qquad (z \in \bt \; a.e.).
\]
If $E = \{z \in \bt: f(z)= 0\}$, then $m(E) = 0$, and hence $|\vp (z)| = \|\vp\|_\infty$ for all $z \in E$, where $E \subseteq \bt$ and $m(E) > 0$. Conversely, if $m(E) > 0$ and $|\vp (z)| = \|\vp\|_\infty$ for all $z \in E$, then $M_\vp$ attain its norm at $\chi_E$. This proves the following:

\begin{proposition}\label{NA scalar Laurent }
Let $\vp \in L^\infty(\bt)$. Then $L_\vp \in \NA$ if and only if there exists a measurable set $A\subseteq \bt$ such that $m(A) > 0$ and $|\vp (z)| = \|\vp\|_\infty$ for all $z \in A$.
\end{proposition}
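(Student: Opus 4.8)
The plan is to prove both directions by a direct computation with the integral formula $\|L_\vp f\|^2 = \int_{\bt} |\vp(z)|^2 |f(z)|^2 \, dm(z)$, together with the standard fact (already recorded above) that $\|L_\vp\| = \|\vp\|_\infty$; the only real content is a little measure-theoretic bookkeeping. For the forward implication, suppose $L_\vp \in \NA$, so that there is a nonzero $f \in L^2(\bt)$ with $\|L_\vp f\| = \|\vp\|_\infty \|f\|$. Subtracting the two integral expressions gives
\[
\int_{\bt} \bigl(\|\vp\|_\infty^2 - |\vp(z)|^2\bigr)\,|f(z)|^2 \, dm(z) = 0 .
\]
Since $|\vp(z)| \le \|\vp\|_\infty$ for a.e.\ $z$ (by the definition of the essential supremum), the integrand is nonnegative a.e., hence vanishes a.e.; therefore $|\vp(z)| = \|\vp\|_\infty$ for a.e.\ $z$ in $A_0 := \{z \in \bt : f(z) \ne 0\}$, and $m(A_0) > 0$ because $f \ne 0$ in $L^2(\bt)$. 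Discarding a null set from $A_0$ then produces the desired set $A$.

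For the converse, suppose $A \subseteq \bt$ is measurable with $m(A) > 0$ and $|\vp(z)| = \|\vp\|_\infty$ for all $z \in A$. Put $f = \chi_A \in L^2(\bt)$ (recall $m$ is finite), which is nonzero since $\|f\|^2 = m(A) > 0$. Then
\[
\|L_\vp f\|^2 = \int_{A} |\vp(z)|^2 \, dm(z) = \|\vp\|_\infty^2 \, m(A) = \|\vp\|_\infty^2 \|f\|^2 = \|L_\vp\|^2 \|f\|^2,
\]
so $L_\vp$ attains its norm at $\chi_A$, i.e.\ $L_\vp \in \NA$.

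There is no substantial obstacle here; the points that need attention are merely (a) invoking $\|L_\vp\| = \|\vp\|_\infty$ so that the norm-attainment identity yields the displayed vanishing integral, and (b) the distinction between ``a.e.'' and ``everywhere'', since the condition $|\vp| = \|\vp\|_\infty$ on $A$ is only meaningful modulo null sets — this is why the forward direction first produces the condition a.e.\ on $\{f \ne 0\}$ and then trims a null set. One may alternatively route the argument through Theorem \ref{thm: all results}: $L_\vp L_\vp^*$ is multiplication by $|\vp|^2$ on $L^2(\bt)$, so $L_\vp \in \NA$ iff $\|\vp\|_\infty^2$ lies in its point spectrum, and any eigenfunction must be supported in $\{z \in \bt : |\vp(z)| = \|\vp\|_\infty\}$ — the same set $A$.
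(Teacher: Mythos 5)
Your proof is correct and follows essentially the same route as the paper: the vanishing of $\int_{\bt}(\|\vp\|_\infty^2-|\vp(z)|^2)|f(z)|^2\,dm(z)$ for the forward direction and the characteristic function $\chi_A$ for the converse. In fact your bookkeeping is slightly more careful than the paper's, which garbles the description of the set $E$ (the zero set of a general $L^2$ function need not be null — only the set $\{f\neq 0\}$ having positive measure is needed, as you correctly use).
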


A similar (but one directional as in Theorem \ref{mult op Vector valued}) statement is valid for operator-valued Laruant operators: Let $\Phi\in L^\infty_{\mathscr{B}(\cle)}(\bt)$. If $L_\Phi \in \NA$, then there exists a measurable set $A \subseteq \bt$ such that $m(A) > 0$ and $\|\Phi(z)\| = \|\Phi\|$, $z \in A$. Again, the converse fails to hold: Example \ref{not NA eg} serves the purpose.

\vspace{0.3in}

\noindent\textit{Acknowledgement:} We are grateful to the referee for a careful reading of the paper and useful suggestions. We are especially thankful to the referee for suggesting the elegant proof of Theorem \ref{prop: NA idempotent}. The research of first and second author is supported by the Theoretical Statistics and Mathematics Unit, Indian Statistical Institute, Bangalore Centre, India. The third author is supported in part by the Mathematical Research Impact Centric Support, MATRICS (MTR/2017/000522), and Core Research Grant (CRG/2019/000908), by SERB, Department of Science \& Technology (DST), and NBHM (NBHM/R.P.64/2014), Government of India. The research of the fourth author is supported by the NBHM postdoctoral fellowship, Department of Atomic Energy (DAE), Government of India (File No: 0204/3/2020/R$\&$D-II/2445).

\end{document}